\newtheorem{theorem}{Theorem}
\newtheorem{proposition}[theorem]{Proposition}
\newtheorem{corollary}[theorem]{Corollary}
\theoremstyle{definition}
\newtheorem{definition}[theorem]{Definition}
\def\cprime{$'$}
\begin{document}

\title{Control of a Novel Chaotic Fractional Order System\\
Using a State Feedback Technique\thanks{This is a preprint 
of a paper whose final and definite form will appear in
\emph{Mechatronics}. Paper submitted 06-Dec-2012; revised 25-Feb-2013 
and 02-March-2013; accepted for publication 10-March-2013.}}

\author{Abolhassan Razminia$^1$\\
\texttt{a.razminia@gmail.com}
\and Delfim F. M. Torres$^2$\\
\texttt{delfim@ua.pt}}


\date{$^1$Department of Electrical Engineering, School of Engineering,\\
Persian Gulf University, Bushehr, Iran\\[0.3cm]
$^2$CIDMA--Center for Research and Development in Mathematics and Applications,\\
Department of Mathematics, University of Aveiro, 3810-193 Aveiro, Portugal}

\maketitle


\begin{abstract}
We consider a new fractional order chaotic system
displaying an interesting behavior.
A necessary condition for the system to remain chaotic is derived.
It is found that chaos exists in the system with order less than three.
Using the Routh--Hurwitz and the Matignon stability criteria,
we analyze the novel chaotic fractional order system
and propose a control methodology that is better than
the nonlinear counterparts available in the literature,
in the sense of simplicity of implementation and analysis.
A scalar control input that excites only one of the states is proposed,
and sufficient conditions for the controller gain to stabilize
the unstable equilibrium points derived.
Numerical simulations confirm the theoretical analysis.

\medskip

\noindent \textbf{Keywords:} chaos; control of chaos; fractional order systems; stability tests.
\end{abstract}


\section{Introduction}

Fractional calculus has been known since the early 17th century \cite{[01]}.
It has been extensively applied in many fields, with
an overwhelming growth of applications during the last three decades.
Examples abound in physics \cite{[02]}, engineering \cite{[03]},
mathematical biology \cite{[04]}, finance \cite{[05]},
life sciences \cite{[06]}, and optimal control \cite{[07]}.
This is due to the fact that, in many applications,
approaches based on fractional derivatives establish
far superior models of engineering systems than the
approaches based on ordinary derivatives \cite{Zhou:Kuang}.
As mentioned in \cite{[01]}, there is no field
that has remained untouched by fractional derivatives.

Historically, the lack of a physical interpretation of fractional derivatives
has been acknowledged at the first international conference on the fractional
calculus in New Haven (USA), in 1974, by including it in the list of open problems
\cite{Ref1:[1]}. The question was not answered, and therefore repeated at the
subsequent conferences at the University of Strathclyde (UK) in 1984 \cite{Ref1:[2]}
and at the Nihon University (Tokyo, Japan) in 1989 \cite{Ref1:[3]}. The round-table
discussion at the conference on transform methods and special functions in Varna (1996)
showed that the problem was still unsolved \cite{Ref1:[4]}. Since then,
the geometric and physical interpretation of fractional derivatives
has been studied by several authors \cite{Ref1:[7],[08]}.
An interesting physical discussion about initial conditions of fractional order systems
is reported in \cite{[09]}, and their role in control theory is addressed
in \cite{MyID:163} for Caputo fractional derivatives, and
in \cite{MyID:181} for Riemann--Liouville fractional derivatives.
Relation between fractional integrals and derivatives
and fractal geometry, showing that for some complex systems substitution
of integer-order derivatives by fractional ones results in more accurate and superior models,
has received special attention \cite{Ref1:[5],Ref1:[6]}.

Chaos is an interesting phenomenon in nonlinear dynamical systems
that has been developed and thoroughly studied over the past two decades.
The reader interested in applications of chaos in medicine and biology,
where fractional calculus has initiated its success and activity in engineering applications,
is referred to \cite{MR2595930,MR1103340}.
A chaotic system is a nonlinear deterministic system that displays complex,
noisy-like and unpredictable behavior. The sensitive dependence on the initial conditions,
and the system parameter variation, is a prominent characteristic of chaotic behavior.
Here we consider a fractional order chaotic system. The corresponding integer-order system
has a chaotic behavior for a wide range of parameters. Such integer-order dynamical system
is a reduced model for a physical system. When such system is implemented using physical
electronic devices, the environment effects (aging of the elements, temperature, inaccurate values,
and so on) on the elements appear as a different behavior so that the response predicted
by the model does not resemble the actual system. Indeed, for such chaotic systems
the super sensitivity of the values of the elements to tiny changes, cannot be considered
in the dynamical equations. The main reason is related to the fractal or holed basin
of the invariant set of the system. In other words, some trace of the system trajectories
are seen in the observed coordinate (phase plane) and some of them lie in the unseen region
that cannot be handled by the classical nonlinear ODE model. For this reason,
the fractional order system is more appropriate than the classical one
\cite{Ref1:[1],Ref1:[2],Ref1:[3],Ref1:[4],Ref1:[5],Ref1:[6],Ref1:[7]}.
This happens to be a frequent situation: for most physical systems which exhibit chaotic behavior, the invariant set
is not an integer-order dimensional object, and the basin of the trajectories in the phase space
is a strange attractor field whose Lyapunov dimensions are non-integer.
The fractional order operators allow to describe the observed behavior by an appropriate kernel in the integral.
This kernel can be treated as a weighting factor that generates a new response using existing vector fields.

The study of fractional order dynamical systems has attracted
an increasing attention in recent years due to their great promise
as a valuable tool in the modeling of many phenomena \cite{[10]}.
As a matter of fact, real world processes generally or most likely
are fractional order systems \cite{[11]}. It has been found that fractional order
systems possess memory and display more sophisticated dynamics when compared
to their integer order counterparts, which is of great significance in secure communications.
On the other hand, due to their potential applications in laser physics,
chemical reactors, secure communications and economics, a new direction
of chaos research has emerged in the past two decades to address the more
challenging problem of chaos synchronization and control \cite{[12],[13],[14],[15]}.
Recent papers study the control and synchronization of chaotic systems
in both integer and fractional order cases, applying various control methodologies:
nonlinear control \cite{[16],Wang:Jia}, adaptive control \cite{[17],Motallebzadeh}, robust control \cite{[18]},
fuzzy control \cite{[19]}, and active control \cite{Razminia}.
The main problem in applying such control methodologies
for taming chaos is often their complexity in implementation. In this paper we propose
a simple linear control mechanism, based on the well-known Routh--Hurwitz stability criterion,
for a fractional order dynamical system.
The system shows very rich nonlinear dynamics,
including chaos and period doubling bifurcations.
A controller is proposed, and both analysis and design are studied.
We show that a single input can control the very complex system.
This makes our results simpler than the conventional nonlinear methods available in the literature
and more feasible to implement.

The rest of the manuscript is organized as follows.
Section~\ref{sec:2} briefly presents the necessary fractional calculus background.
In Section~\ref{sec:3} we recall the stability criteria that are used
in our subsequent analysis and design. Description of an interesting system
is presented in Section~\ref{sec:4}, while the new results are given in Section~\ref{sec:5},
where we illustrate our control methodology for taming the fractional order chaotic system
corresponding to the one of Section~\ref{sec:4}.
Conclusions and future directions of research are given in Section~\ref{sec:6}.


\section{Fractional Calculus Background}
\label{sec:2}

In this section some necessary mathematical background is presented.
For more details see the books
\cite{book:Magin,book:Mal:Tor,book:Oldham,book:Oustaloup,book:Podlubny}.

\begin{definition}[see, \textrm{e.g.}, \cite{[20]}]
The (left) fractional integral of $x$ of order $q$,
$q \in \mathbb{R}^+$, is defined by
\begin{equation*}
{}_aD_t^{ - q}x(t): = \frac{1}{\Gamma (q)}
\int_a^t {{{(t - s)}^{q - 1}}x(s)\,ds},
\end{equation*}
where $\displaystyle \Gamma (q) = \int_0^\infty  {{e^{ - z}}{z^{q - 1}}dz}$
is the Gamma function.
\end{definition}

\begin{definition}[see, \textrm{e.g.}, \cite{[16]}]
The (left) fractional derivative of $x$ of order $q$, $q \in \mathbb{R}^+$,
in the sense of Riemann--Liouville, is defined by
\begin{equation*}
{}_a^{RL}D_t^qx(t):= {D^m}{}_aD_t^{ - (m - q)}x(t)
= \frac{1}{\Gamma (m - q)}\frac{{{d^m}}}{d{t^m}}
\int_a^t {{{(t - s)}^{m - q - 1}}x(s)ds},
\end{equation*}
where $m \in {{\mathbb{Z}}^+}$ is such that $m - 1 < q < m$.
\end{definition}

\begin{theorem}[see, \textrm{e.g.}, \cite{[21]}]
\label{thm:rem}
For the fractional Riemann--Liouville derivative and integral one has:
\begin{enumerate}
\item $\mathcal{L}\left\{{{}_aD_t^{ - q}x(t)}\right\} = {s^{ - q}}X(s)$;

\item $\mathop{\lim}\limits_{q \to m} {}_0D_t^{ - q}x(t) = D^{-m}x(t)$,
$q > 0$, $m \in {{\mathbb{Z}}^+}$;

\item \label{item:3} ${}_0^{RL}D_t^qc = {\frac{c{t^{q - 1}}}{\Gamma (1 - q)}}$;

\item ${}_0^{RL}D_t^q{}_0D_t^{-q}x(t) = x(t)$, $q \in {{\mathbb{R}}^+}$;

\item \label{item:5} $\mathcal{L}\left\{ {{}_0^{RL}D_t^qx(t)} \right\}
= {s^q}X(s) - \sum\limits_{k = 0}^{m - 1} {{s^k} \cdot {}_0^{RL}D_t^{q - k - 1}x(0)}$,
$m - 1 < q < m$, $m \in {{\mathbb{Z}}^ +}$;
\end{enumerate}
where $\mathcal{L}$ denotes the Laplace transform, $c$ a constant,
and $D^{-m}$ the $m$-folded integral.
\end{theorem}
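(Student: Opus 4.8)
The plan is to obtain all five items from two elementary facts about the operator ${}_0D_t^{-q}$: that it is a convolution with the kernel $t^{q-1}/\Gamma(q)$, and that Riemann--Liouville integrals obey the semigroup law ${}_0D_t^{-\alpha}\,{}_0D_t^{-\beta}x={}_0D_t^{-(\alpha+\beta)}x$ for $\alpha,\beta>0$. For item~1 (with $a=0$, as in the remaining items), I would note that $\mathcal{L}\{t^{q-1}\}=\Gamma(q)\,s^{-q}$ and invoke the convolution theorem, which immediately gives $\mathcal{L}\{{}_0D_t^{-q}x\}=s^{-q}X(s)$. For item~2 I would let $q\to m\in\mathbb{Z}^+$: the kernel tends to $t^{m-1}/(m-1)!$, and Cauchy's formula for the $m$-fold iterated integral identifies the resulting expression with $D^{-m}x$, the interchange of limit and integral being justified by dominated convergence under local integrability of $x$. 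Item~3 is a direct computation: substituting the constant $c$ into the definition, evaluating the elementary integral $\int_a^t(t-s)^{m-q-1}\,ds$, and applying the integer-order power rule $D^m t^{\mu}=\frac{\Gamma(\mu+1)}{\Gamma(\mu-m+1)}\,t^{\mu-m}$ with $\mu=m-q$; the Gamma factors cancel and leave the closed form stated in item~3.

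For item~4 I would start from ${}_0^{RL}D_t^q=D^m\,{}_0D_t^{-(m-q)}$, compose with ${}_0D_t^{-q}$, collapse the two integrals by the semigroup law into ${}_0D_t^{-(m-q)}\,{}_0D_t^{-q}={}_0D_t^{-m}$, and cancel $D^m\,{}_0D_t^{-m}$ by the fundamental theorem of calculus, which returns $x$. Item~5 is the most laborious. Writing ${}_0^{RL}D_t^q x=D^m g$ with $g:={}_0D_t^{-(m-q)}x$, I would apply the classical Laplace rule $\mathcal{L}\{D^m g\}=s^m G(s)-\sum_{j=0}^{m-1}s^{m-1-j}g^{(j)}(0)$, substitute $G(s)=s^{q-m}X(s)$ from item~1 so that $s^m G(s)=s^q X(s)$, then re-index $k:=m-1-j$ and identify each boundary coefficient $g^{(m-1-k)}(0)=D^{m-1-k}\,{}_0D_t^{-(m-q)}x\big|_{t=0}$ with ${}_0^{RL}D_t^{\,q-k-1}x(0)$, using once more the semigroup law together with the convention that a negative order denotes the corresponding fractional integral (this covers the $k=m-1$ term, where the order $q-m$ is negative). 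Collecting terms yields the formula of item~5.

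The genuine work, and the step I expect to be the main obstacle, is proving the semigroup law ${}_0D_t^{-\alpha}\,{}_0D_t^{-\beta}={}_0D_t^{-(\alpha+\beta)}$ with appropriate hypotheses on $x$: it rests on Fubini's theorem to exchange the iterated integrals and on the Beta-integral identity $\int_s^t(t-\tau)^{\alpha-1}(\tau-s)^{\beta-1}\,d\tau=B(\alpha,\beta)\,(t-s)^{\alpha+\beta-1}$, after which $B(\alpha,\beta)=\Gamma(\alpha)\Gamma(\beta)/\Gamma(\alpha+\beta)$ makes the normalizing constants telescope; the same Fubini argument underlies Cauchy's repeated-integration formula used in item~2. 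Once that lemma and the classical integer-order Laplace and differentiation rules are available, items~1--5 are assembled as above, the only remaining care being the bookkeeping of the boundary terms in item~5 and the matching of notational conventions for derivatives and integrals of non-positive order. Since all of this is standard, in the paper the statement is simply referred to \cite{[21]} and the details are omitted.
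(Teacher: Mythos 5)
The paper gives no proof of this theorem; it simply cites \cite{[21]}, and your outline is the standard derivation that such references contain: the convolution theorem for item~1, Cauchy's repeated-integration formula for item~2, the semigroup law of fractional integrals (via Fubini and the Beta integral) for items~4 and~5, and the classical Laplace rule for $D^m$ together with the identification $g^{(m-1-k)}(0)={}_0^{RL}D_t^{q-k-1}x(0)$ for the boundary terms of item~5. All of that is sound, granted the implicit regularity on $x$ you mention and the observation that item~1 with a general lower terminal $a$ is really the $a=0$ convolution identity.

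One concrete error: your item~3 computation does not ``leave the closed form stated in item~3.'' Substituting the constant and integrating gives $\int_0^t(t-s)^{m-q-1}\,ds=t^{m-q}/(m-q)$, and applying $D^m$ with the power rule yields
\[
{}_0^{RL}D_t^q c=\frac{c}{\Gamma(m-q+1)}\cdot\frac{\Gamma(m-q+1)}{\Gamma(1-q)}\,t^{-q}=\frac{c\,t^{-q}}{\Gamma(1-q)},
\]
with exponent $-q$, not $q-1$ as printed (the two coincide only for $q=1/2$). The exponent $t^{q-1}$ in the statement is a misprint of the standard identity ${}_0^{RL}D_t^q c = c\,t^{-q}/\Gamma(1-q)$; your method is correct, but you should not assert that it reproduces the printed formula --- either correct the exponent or flag the discrepancy. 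Nothing else in your plan fails.
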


We note that the Riemann--Liouville differentiation of a constant is not zero
(item~\ref{item:3} of Theorem~\ref{thm:rem});
while its Laplace transform needs fractional derivatives
of the function in initial time (item~\ref{item:5} of Theorem~\ref{thm:rem}).
To overcome these ``imperfections'', the Caputo fractional derivative
has been introduced.

\begin{definition}[see, \textrm{e.g.}, \cite{[16]}]
The (left) fractional derivative of $x$ of order $q$, $q \in \mathbb{R}^+$,
in the sense of Caputo, is defined by
\begin{equation*}
{}_a^CD_t^qx(t): = {}_a^{RL}D_t^{ - (m - q)}{D^m}x(t)
= {\frac{1}{\Gamma (m - q)}}\int_a^t {{{(t - s)}^{m - q - 1}}{x^{(m)}}(s)ds},
\end{equation*}
where $m \in {{\mathbb{Z}}^+}$ is such that $m - 1 < q < m$,
$D^m$ is the standard differential operator of order $m$.
\end{definition}

\begin{theorem}[see, \textrm{e.g.}, \cite{[17]}]
For the fractional Caputo derivative one has:
\begin{enumerate}
\item ${}_0^CD_t^qc = 0$;

\item ${}_0^CD_t^q{}_0D_t^{ - q}x(t)
= {}_0^{RL}D_t^q{}_0D_t^{ - q}x(t) = x(t)$, $0 < q < 1$;

\item $\mathcal{L}\left\{ {{}_0^CD_t^qx(t)} \right\}
= {s^q}X(s) - \sum\limits_{k = 0}^{m - 1} {{s^{q - k - 1}}{x^{(k)}}(0)}$,
$m - 1 < q < m$, $m \in {{\mathbb{Z}}^+}$;
\end{enumerate}
where $c$ denotes a constant and $\mathcal{L}$ the Laplace transform.
\end{theorem}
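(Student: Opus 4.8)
The plan is to verify the three items one at a time, in each case reducing to the definitions and to the Riemann--Liouville facts collected in Theorem~\ref{thm:rem}. Item~1 is immediate: by the definition of the Caputo derivative, ${}_0^CD_t^qc = {}_0D_t^{-(m-q)}D^mc$ with $m\ge 1$, and since $D^mc=0$ the fractional integral on the right is the fractional integral of the zero function, hence zero.

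For item~2 the range is $0<q<1$, so $m=1$. The second equality ${}_0^{RL}D_t^q{}_0D_t^{-q}x(t)=x(t)$ is exactly item~4 of Theorem~\ref{thm:rem}, so only the first equality needs argument. I would first record the pointwise identity between the two derivatives of order $q\in(0,1)$, namely ${}_0^{RL}D_t^qy(t) = {}_0^CD_t^qy(t) + \frac{y(0)}{\Gamma(1-q)}\,t^{-q}$, which follows from a single integration by parts inside the Riemann--Liouville integral (equivalently, from Taylor's formula with integral remainder for $y$ at the origin). Applying this with $y(t)={}_0D_t^{-q}x(t)$ and using $y(0)=0$ --- the fractional integral of a function bounded near the lower limit tends to $0$ there like $t^{q}$ --- gives ${}_0^CD_t^q{}_0D_t^{-q}x(t)={}_0^{RL}D_t^q{}_0D_t^{-q}x(t)$, and the latter equals $x(t)$ by item~4. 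Alternatively, one can differentiate ${}_0D_t^{-q}x$ explicitly, use the semigroup property ${}_0D_t^{-(1-q)}{}_0D_t^{-q}={}_0D_t^{-1}$ and a Beta-function integral to reach the same conclusion without invoking item~4.

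For item~3 I apply the Laplace transform to the Caputo definition ${}_0^CD_t^qx(t)={}_0D_t^{-(m-q)}\bigl(D^mx\bigr)(t)$. Item~1 of Theorem~\ref{thm:rem} gives $\mathcal{L}\{{}_0^CD_t^qx(t)\}=s^{-(m-q)}\mathcal{L}\{D^mx(t)\}$; inserting the classical rule $\mathcal{L}\{D^mx(t)\}=s^mX(s)-\sum_{k=0}^{m-1}s^{\,m-1-k}x^{(k)}(0)$ and absorbing the factor $s^{-(m-q)}$ into each term yields $s^qX(s)-\sum_{k=0}^{m-1}s^{\,q-k-1}x^{(k)}(0)$, which is the asserted formula.

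The step I expect to be the main obstacle is item~2. The delicate points are the regularity hypotheses needed to make the manipulations rigorous (boundedness of $x$ near $0$, so that $({}_0D_t^{-q}x)(0)=0$, together with enough smoothness to justify the integration by parts and the differentiation under the integral sign), and the fact that the Riemann--Liouville derivative is only a \emph{left} inverse of the fractional integral in general: it is precisely the vanishing of the initial value of $y(t)={}_0D_t^{-q}x(t)$ that kills the extra singular term $\frac{y(0)}{\Gamma(1-q)}t^{-q}$ and makes the Caputo composition collapse to $x(t)$.
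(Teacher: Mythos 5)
The paper states this result as quoted background from the literature (citing \cite{[17]}) and gives no proof of its own, so there is nothing to compare your argument against. Your derivation is correct and standard: item~1 follows at once from $D^mc=0$; item~2 correctly combines the relation ${}_0^{RL}D_t^qy(t)={}_0^CD_t^qy(t)+\frac{y(0)}{\Gamma(1-q)}t^{-q}$ with the vanishing of $\bigl({}_0D_t^{-q}x\bigr)(0)$ and item~4 of Theorem~\ref{thm:rem}; and item~3 is the composition of item~1 of Theorem~\ref{thm:rem} with the classical Laplace rule for $x^{(m)}$, with the exponents checking out as $s^{-(m-q)}s^{m-1-k}=s^{q-k-1}$. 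Your closing remarks on the regularity needed for item~2, and on why the Caputo composition collapses to $x(t)$ precisely because the initial value of the fractional integral vanishes, are exactly the right points to flag.
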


In the next section we study some stability tests for fractional order systems.


\section{Stability Criteria}
\label{sec:3}

A fractional order dynamical system is usually described by

\begin{equation}
\label{eq:4}
\begin{cases}
{{}_0^CD_t^qx(t) = f(x(t),t),\,\,\,\,\,m - 1 < q < m,\  m \in {{\mathbb{Z}}^+},\  t > 0},\\
{\left[ {{}_0^{RL}D_t^kx(t)} \right]\left| {_{t = 0}} \right.
= x_0^k,\qquad k = 0, \ldots ,\,m - 1},
\end{cases}
\end{equation}
where $x(t) \in {{\mathbb{R}}^n}$ is the vector state at time $t$,
$f:{{\mathbb{R}}^n} \times {\mathbb{R}} \to {{\mathbb{R}}^n}$
the nonlinear vector field, and
$q = \left({{q_1}}, \ldots, {{q_n}}\right)^T$
the differentiation order vector.
If ${q_1} = {q_2} =  \cdots  = {q_n} =: \alpha$,
we call \eqref{eq:4} a commensurate fractional order dynamical system;
otherwise, \eqref{eq:4} is said to be incommensurate. The sum of the orders
of all involved derivatives in Eq.~\eqref{eq:4}, \textrm{i.e.},
$\sum\limits_{i = 1}^n {{q_i}}$, is called the effective dimension
of Eq.~\eqref{eq:4} \cite{[22]}. The size of vector $x(t)$ in state space form \eqref{eq:4},
\textrm{i.e.}, $n$, is called the inner dimension of system \eqref{eq:4} \cite{[23]}.

\begin{theorem}[see \cite{[19]}]
The commensurate order system
\begin{equation}
\label{eq:5}
{}_0^CD_t^\alpha x(t) = Ax(t), \quad x(0) = {x_0},
\end{equation}
with $0 < \alpha \le 1$, $x(t) \in {{\mathbb{R}}^n}$,
and $A \in {{\mathbb{R}}^{n \times n}}$,
is asymptotically stable if, and only if,
$\left| {\arg \,(\lambda )} \right| > \alpha{\frac{\pi}{2}}$
for all eigenvalues $\lambda$ of $A$.
Moreover, system \eqref{eq:5} is stable if, and only if,
$\left| {\arg \,(\lambda )} \right| \ge \alpha{\frac{\pi}{2}}$
for all eigenvalues $\lambda$ of $A$, with those critical
eigenvalues satisfying $\left| {\arg \,(\lambda )} \right|
= \alpha{\frac{\pi}{2}}$ having geometric multiplicity of one.
\end{theorem}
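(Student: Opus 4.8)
The plan is to reduce the matrix equation to a family of scalar (and Jordan-block) equations and then to invoke the asymptotic behavior of the Mittag--Leffler function. First I would bring $A$ to Jordan canonical form, $A = P J P^{-1}$, and set $x(t) = P y(t)$. Since the Caputo operator ${}_0^C D_t^\alpha$ acts componentwise and commutes with the constant change of coordinates $P$, the system decouples into independent blocks ${}_0^C D_t^\alpha y = J_\ell\, y$, where each $J_\ell = \lambda I + N$ is a single $r \times r$ Jordan block with eigenvalue $\lambda$ and $N$ nilpotent. Applying the Laplace transform and the transform rule for the Caputo derivative (with $m = 1$ since $0 < \alpha \le 1$), one gets $\mathcal{L}\{y\}(s) = (s^\alpha I - J_\ell)^{-1} s^{\alpha - 1} y(0)$, hence $y(t) = E_\alpha(J_\ell t^\alpha)\, y(0)$; for a $1 \times 1$ block this is just $y(t) = E_\alpha(\lambda t^\alpha)\, y(0)$, while for an $r \times r$ block it is a finite linear combination, with coefficients polynomial in $t$ of degrees at most $\alpha(r-1)$, of $E_\alpha(\lambda t^\alpha)$ and its first $r-1$ derivatives (equivalently, of the two-parameter functions $t^{\alpha j} E_{\alpha,\alpha j + 1}(\lambda t^\alpha)$).

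Next I would invoke the classical asymptotics of $E_{\alpha,\beta}$ (available, e.g., in the cited monograph of Podlubny): for $0 < \alpha < 2$ and $|z| \to \infty$, $E_{\alpha,\beta}(z)$ behaves like $\tfrac{1}{\alpha} z^{(1-\beta)/\alpha}\exp(z^{1/\alpha})$ plus an algebraically decaying tail $-\sum_{k \ge 1} z^{-k}/\Gamma(\beta - \alpha k)$ when $|\arg z| < \alpha\pi/2$, and like the algebraic tail alone (leading order $|z|^{-1}$) when $\alpha\pi/2 < |\arg z| \le \pi$. Because $\arg(\lambda t^\alpha) = \arg\lambda$ for all $t > 0$, the hypothesis $|\arg\lambda| > \alpha\pi/2$ for every eigenvalue forces each block solution to decay like $t^{-\alpha}$, which gives asymptotic stability of the zero solution. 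Conversely, if some eigenvalue satisfies $|\arg\lambda| < \alpha\pi/2$, then $\mathrm{Re}(\lambda^{1/\alpha}) > 0$ and the corresponding mode contains the exponentially growing factor $\exp(\lambda^{1/\alpha} t)$, so the system is unstable. (Since $A$ is real, its complex eigenvalues occur in conjugate pairs, so the condition on $|\arg\lambda|$ is symmetric and well posed.)

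Finally I would handle the borderline eigenvalues with $|\arg\lambda| = \alpha\pi/2$. For such $\lambda$ the exponent $\lambda^{1/\alpha}$ is purely imaginary, so $E_\alpha(\lambda t^\alpha)$ stays bounded but does not tend to $0$; this already rules out asymptotic stability once a critical eigenvalue is present. If, moreover, every critical eigenvalue has geometric multiplicity one, then no Jordan block of size $\ge 2$ is attached to it, so no polynomially growing coefficient $t^{\alpha j}$ ($j \ge 1$) multiplies a bounded oscillatory Mittag--Leffler factor, and all modes remain bounded; together with the subexponential decay of the non-critical modes this yields (marginal) stability. If instead some critical eigenvalue carries a Jordan block of size $\ge 2$, the term $t^{\alpha} E_{\alpha,\alpha+1}(\lambda t^\alpha)$ grows like $t^\alpha$ and stability fails. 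Collecting the three cases establishes both equivalences in the statement.

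I expect the main obstacle to be the careful use of the Mittag--Leffler asymptotics, uniformly in the sectors $|\arg z| < \alpha\pi/2$, $|\arg z| > \alpha\pi/2$, and exactly on the critical rays $|\arg z| = \alpha\pi/2$, together with the bookkeeping of the Jordan-block contributions and their polynomial-in-$t$ coefficients; the reduction to Jordan form and the Laplace-transform computation are routine.
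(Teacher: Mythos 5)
The paper does not actually prove this result: it is quoted verbatim from the literature (it is Matignon's stability theorem), so there is no in-paper argument to compare against. Your proposal is the standard proof of that theorem --- Jordan reduction, Laplace transform giving $x(t)=E_\alpha(At^\alpha)x_0$, and the two-sector asymptotic expansion of $E_{\alpha,\beta}$ --- and the main line of reasoning (algebraic decay like $t^{-\alpha}$ when $|\arg z|>\alpha\pi/2$, exponential growth of $\exp(\lambda^{1/\alpha}t)$ when $|\arg\lambda|<\alpha\pi/2$, bounded oscillation on the critical rays) is correct and is how the result is established in the sources the paper cites.

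There is, however, one genuine slip in your treatment of the borderline case. You assert that ``geometric multiplicity one'' implies ``no Jordan block of size $\ge 2$ is attached to the critical eigenvalue.'' These are not equivalent: geometric multiplicity one means there is exactly \emph{one} Jordan block, which may perfectly well have size $\ge 2$ (algebraic multiplicity $2$ with geometric multiplicity $1$ is precisely a single $2\times 2$ block, and for such a block your own computation exhibits an unbounded mode of the form $t^{\alpha}E_{\alpha,\alpha+1}(\lambda t^{\alpha})$ on the critical ray). What your argument actually uses --- and what makes the marginal-stability claim true --- is that every critical eigenvalue be semisimple, i.e.\ that its geometric multiplicity equal its algebraic multiplicity, so that only trivial $1\times 1$ blocks occur. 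This imprecision is inherited from the way the theorem is commonly quoted (including in the statement above), but as written your step ``geometric multiplicity one $\Rightarrow$ all critical modes bounded'' is false and needs to be repaired by replacing the hypothesis with semisimplicity of the critical eigenvalues. Two smaller points: to conclude non-(asymptotic-)stability from a bad eigenvalue you should remark that the offending mode is excited by a suitable \emph{real} initial condition (real and imaginary parts of the relevant generalized eigenvector), and the growth rate of a critical Jordan-block mode of size $r$ is $t^{r-1}$ rather than $t^{\alpha}$ --- harmless for the conclusion, but worth stating correctly.
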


\begin{theorem}[see \cite{[24]}]
Consider the following linear fractional order system:
\begin{equation}
\label{eq:6}
{}_0^CD_t^qx(t) = Ax(t), \quad x(0) = {x_0},
\end{equation}
where $x(t) \in {{\mathbb{R}}^n}$, $A \in {{\mathbb{R}}^{n \times n}}$,
and $q = \left({{q_1}}, \ldots,  {{q_n}}\right)^T$, $0 < {q_i} \le 1$,
${q_i} = {\frac{{n_i}}{{d_i}}}$ and $\gcd ({n_i},{d_i}) = 1$,
$i = 1, \ldots, n$. If $M$ is the least common multiple of the denominators $d_i$,
$i = 1, \ldots, n$, then the zero solution of \eqref{eq:6} is globally asymptotically
stable in the Lyapunov sense if all roots $\lambda$ of equation
\begin{equation}
\label{eq:7}
\Delta(\lambda ) = \det \,\left( {diag\,({\lambda ^{M{q_i}}}) - A} \right) = 0
\end{equation}
satisfy $\left| {\arg \,(\lambda )} \right| > \frac{\pi}{2M}$.
\end{theorem}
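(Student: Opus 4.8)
The plan is to reduce the incommensurate system \eqref{eq:6} to the analysis of a single polynomial in a suitable fractional power of the Laplace variable, and then to reuse the contour argument that underlies the commensurate stability theorem stated above.

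First I would apply the Laplace transform to \eqref{eq:6} componentwise. Since $0<q_i\le 1$, the relevant integer in the Caputo transform formula is $m=1$, so $\mathcal{L}\{{}_0^CD_t^{q_i}x_i(t)\}=s^{q_i}X_i(s)-s^{q_i-1}x_i(0)$. Writing $\Lambda(s):=\operatorname{diag}(s^{q_1},\dots,s^{q_n})$, system \eqref{eq:6} transforms into $(\Lambda(s)-A)X(s)=\operatorname{diag}(s^{q_i-1})\,x_0$, whence
\[
X(s)=\bigl(\Lambda(s)-A\bigr)^{-1}\operatorname{diag}(s^{q_i-1})\,x_0
=\frac{\operatorname{adj}\bigl(\Lambda(s)-A\bigr)}{\det\bigl(\Lambda(s)-A\bigr)}\,\operatorname{diag}(s^{q_i-1})\,x_0 .
\]
The solution is then recovered by the Bromwich integral $x(t)=\frac{1}{2\pi i}\int_{c-i\infty}^{c+i\infty}e^{st}X(s)\,ds$, and its behaviour as $t\to\infty$ is governed by the singularities of $X(s)$: the branch point at $s=0$ produced by the non-integer powers, and the zeros of $\det(\Lambda(s)-A)$.

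Next I would apply the uniformizing substitution $\lambda=s^{1/M}$, i.e.\ $s=\lambda^{M}$. Because each $q_i=n_i/d_i$ is written in lowest terms and $M=\operatorname{lcm}(d_1,\dots,d_n)$, every exponent $Mq_i=Mn_i/d_i$ is a positive integer, so $\det(\Lambda(s)-A)=\det\bigl(\operatorname{diag}(\lambda^{Mq_i})-A\bigr)=\Delta(\lambda)$ is an honest polynomial in $\lambda$, namely the one of \eqref{eq:7}; moreover the principal branch of $s^{1/M}$ maps the slit plane $\{|\arg s|<\pi\}$ bijectively onto the sector $\{|\arg\lambda|<\pi/M\}$, and on it $s^{q_i}=\lambda^{Mq_i}$. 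Consequently the singularities of $X(s)$ away from $s=0$ are finitely many points $s_k=\lambda_k^{M}$, where $\lambda_k$ ranges over the roots of $\Delta$ lying in the principal sector $|\arg\lambda_k|<\pi/M$ (roots of $\Delta$ outside that sector are irrelevant, but already satisfy the hypothesis).

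Finally I would deform the Bromwich contour into a Hankel-type contour wrapping the branch cut on the negative real axis, collecting en route the residues at the poles $s_k$. The hypothesis $|\arg\lambda|>\pi/(2M)$ for all roots of \eqref{eq:7}, applied to the $\lambda_k$, gives $\pi/(2M)<|\arg\lambda_k|<\pi/M$, hence $|\arg s_k|=M|\arg\lambda_k|\in(\pi/2,\pi)$ and in particular $\operatorname{Re}(s_k)<0$; note that this is exactly the commensurate asymptotic-stability condition of the theorem above with $\alpha=1/M$. Each residue therefore contributes a term decaying like $e^{\operatorname{Re}(s_k)t}$ (times at worst a polynomial in $t$, still decaying), while the loop integral around $s=0$ is estimated, from the local expansion $X(s)\sim -A^{-1}\operatorname{diag}(s^{q_i-1})x_0$, to decay algebraically at rate $O(t^{-\min_i q_i})$, and the large-radius arcs vanish by a Jordan-type estimate on $\|(\Lambda(s)-A)^{-1}\operatorname{diag}(s^{q_i-1})\|$. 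Adding the contributions gives $\|x(t)\|\to 0$ for every $x_0$; linearity of \eqref{eq:6} upgrades this attractivity to a global one, while the same representation bounds the trajectory and yields Lyapunov stability. I expect the main obstacle to be the rigorous estimation of the contour arcs and of the integral along the branch cut near the origin — in particular excluding any non-decaying contribution there and, should some $\lambda_k$ be a multiple root, confirming that the resulting factors $t^{j}e^{s_k t}$ still vanish at infinity because $\operatorname{Re}(s_k)<0$.
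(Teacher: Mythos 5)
The paper does not prove this theorem: it is quoted verbatim from reference \cite{[24]} (Deng, Li and L\"u) as a known stability criterion, so there is no in-paper argument to compare against. Your outline is, in substance, the standard proof from that source: Laplace transform of the Caputo system, the resolvent representation $X(s)=(\Lambda(s)-A)^{-1}\mathrm{diag}(s^{q_i-1})x_0$, the uniformization $s=\lambda^{M}$ turning $\det(\Lambda(s)-A)$ into the polynomial $\Delta(\lambda)$ of \eqref{eq:7}, and the observation that the angle condition $|\arg\lambda|>\pi/(2M)$ forces every pole $s_k=\lambda_k^{M}$ in the principal sheet to satisfy $\mathrm{Re}(s_k)<0$, after which a Bromwich-to-Hankel deformation yields decay. (The cited reference reaches the same conclusion slightly more quickly via a final-value-theorem argument on $sX(s)$, which your contour analysis essentially justifies.) Two points deserve care beyond what you flag. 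First, your local expansion near $s=0$ uses $A^{-1}$; this is legitimate only because $\lambda=0$ cannot satisfy the hypothesis, so $\Delta(0)=\det(-A)\neq 0$ is implicitly assumed --- worth saying explicitly. Second, the hypothesis permits roots with $|\arg\lambda_k|$ exactly $\pi/M$, which land \emph{on} the branch cut $s<0$ rather than among your residues; the two-sided cut integrand $\det\left(\mathrm{diag}(|s|^{q_i}e^{\pm i\pi q_i})-A\right)^{-1}$ can then be singular, and the clean ``residues plus integrable cut contribution'' decomposition needs an extra argument (a deformed cut or a separate treatment of those boundary zeros). Neither issue changes the approach, but both must be closed for a complete proof.
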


\begin{theorem}[see \cite{[25]}]
\label{thm3}
Let $Q = \left({x^*},{y^*},{z^*}\right)$
be an equilibrium solution of \eqref{eq:4} when $n = 3$
and $0 < {q_1} = {q_2} = {q_3} =: \alpha \le 1$;
and the eigenvalues of the equilibrium point $Q$ for the Jacobian matrix
$J: = {\frac{\partial f}{\partial x}}\left|{_Q} \right.$
be given by the polynomial
$\Delta(\lambda ) = {\lambda ^3} + {a_1}{\lambda ^2} + {a_2}\lambda  + {a_3} = 0$
with discriminant
\begin{equation}
\label{eq:9}
D(\Delta) = 18{a_1}{a_2}{a_3} + {({a_1}{a_2})^2}
- 4{a_3}{({a_1})^3} - 4{({a_2})^3} - 27{({a_3})^2}.
\end{equation}
The following holds:
\begin{description}
\item[(i)] If $D(\Delta) > 0$, then a necessary and sufficient condition
for the equilibrium point $Q$ to be locally asymptotically stable is
${a_1} > 0$, ${a_3} > 0$, ${a_1}{a_2} - {a_3} > 0$.

\item[(ii)] If $D(\Delta ) < 0$ and ${a_1} \ge 0$, ${a_2} \ge 0$, ${a_3} \ge 0$,
then $Q$ is locally asymptotically stable for $\alpha < {\frac{2}{3}}$. However,
if $D(\Delta) < 0$, ${a_1} < 0$, ${a_2} < 0$, $\alpha > {\frac{2}{3}}$,
then all roots of Eq.~\eqref{eq:9} satisfy the condition
$\left| {\arg (\lambda )} \right| < \alpha{\frac{\pi}{2}}$.

\item[(iii)] If $D(\Delta ) < 0$, ${a_1} > 0$, ${a_2} > 0$, ${a_1}{a_2} - {a_3} = 0$,
then $Q$ is locally asymptotically stable for all  $\alpha \in \left({0,1}\right)$.

\item[(iv)] A necessary condition for the equilibrium point $Q$
to be locally asymptotically stable is ${a_3} > 0$.

\item[(v)] If the conditions $D(\Delta) < 0$, ${a_1} > 0$, ${a_2} > 0$,
${a_1}{a_2} - {a_3} = 0$ are satisfied, then the equilibrium point $Q$
is not locally asymptotically stable for $\alpha = 1$.
\end{description}
\end{theorem}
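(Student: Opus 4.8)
The plan is to reduce everything to the location of the roots of $\Delta(\lambda)=\lambda^3+a_1\lambda^2+a_2\lambda+a_3$ and then invoke the two tools already at our disposal: Matignon's criterion (the first theorem of Section~\ref{sec:3}), which says that $Q$ is locally asymptotically stable precisely when $|\arg(\lambda)|>\alpha\pi/2$ for every root $\lambda$ of $\Delta$, and the Routh--Hurwitz test, which says that $\Delta$ has all roots in the open left half plane exactly when $a_1>0$, $a_3>0$ and $a_1a_2-a_3>0$. The first move is to use the sign of the discriminant $D(\Delta)$ to fix the shape of the spectrum: $D(\Delta)>0$ gives three distinct real roots, and $D(\Delta)<0$ gives one real root $r$ together with a complex conjugate pair $\sigma\pm i\omega$, $\omega>0$. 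After that each item is a short argument, in which one writes down the arguments of the roots and compares them with $\alpha\pi/2$.

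For item (i), since all three roots are real, $|\arg(\lambda)|$ is $0$ for a positive root and $\pi$ for a negative one; as $\alpha\le1$ we have $\alpha\pi/2\le\pi/2<\pi$, so the Matignon condition holds for every root if and only if all three roots are strictly negative. By Vieta's formulas a monic real cubic has all roots negative exactly when $a_1>0$, $a_3>0$ and $a_1a_2-a_3>0$ (the last forces $a_2>0$ as well), so in this $D(\Delta)>0$ regime the three Routh--Hurwitz inequalities, which a priori only control the real parts, in fact control the roots themselves. Matignon's theorem then gives the stated necessary and sufficient condition.

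For item (ii), write the Vieta relations $r+2\sigma=-a_1$, $2r\sigma+\sigma^2+\omega^2=a_2$, $r(\sigma^2+\omega^2)=-a_3$. Eliminating $r$ between the first two produces the key identity $\omega^2=3\sigma^2+2a_1\sigma+a_2$, while the third rewrites as $(a_1+2\sigma)(\sigma^2+\omega^2)=a_3$. If $a_1,a_2,a_3\ge0$ then $r\le0$ and $\sigma\ge-a_1/2$; when $\sigma>0$ the key identity forces $\omega^2\ge3\sigma^2$, hence $|\arg(\sigma\pm i\omega)|=\arctan(\omega/\sigma)\ge\arctan\sqrt3=\pi/3$, and when $\sigma\le0$ one has $|\arg(\sigma\pm i\omega)|\ge\pi/2$ trivially; since also $|\arg(r)|=\pi$, every root satisfies $|\arg(\lambda)|\ge\pi/3>\alpha\pi/2$ as soon as $\alpha<2/3$, which is exactly the value making $\alpha\pi/2=\pi/3$, and Matignon's theorem yields local asymptotic stability. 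Under the opposite sign pattern $a_1<0$, $a_2<0$, the same identities run the other way and place the roots inside the cone $|\arg(\lambda)|<\alpha\pi/2$ once $\alpha>2/3$. Establishing this instability direction with the sharp constant $2/3$ is, I expect, the main technical obstacle: one has to control both the real part $\sigma$ and the modulus $\sqrt{\sigma^2+\omega^2}$ of the complex pair using only the signs of the coefficients.

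Items (iii)--(v) all exploit the boundary relation $a_1a_2-a_3=0$: with $a_1,a_2>0$ this factors $\Delta(\lambda)=(\lambda+a_1)(\lambda^2+a_2)$, so the roots are $-a_1<0$ and $\pm i\sqrt{a_2}$. For $\alpha\in(0,1)$ we get $|\arg(\pm i\sqrt{a_2})|=\pi/2>\alpha\pi/2$ and $|\arg(-a_1)|=\pi>\alpha\pi/2$, so Matignon gives asymptotic stability and item (iii) follows; at $\alpha=1$ the pair $\pm i\sqrt{a_2}$ lies exactly on the line $|\arg|=\pi/2=\alpha\pi/2$, the system carries undamped oscillations, and it is not asymptotically stable, which is item (v). Finally, item (iv) holds because $a_3$ equals minus the product of the roots: if $a_3\le0$ that product is nonnegative, which in either spectral configuration above forces a root $\lambda$ with $\lambda=0$ or $\lambda>0$ real, and for such a root the Matignon condition $|\arg(\lambda)|>\alpha\pi/2$ fails; hence $a_3>0$ is necessary for local asymptotic stability.
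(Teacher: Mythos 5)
First, a point of context: the paper offers no proof of this theorem at all --- it is quoted from \cite{[25]} as a known ``fractional Routh--Hurwitz'' result --- so your attempt can only be judged against the statement itself. Your overall strategy (use the sign of $D(\Delta)$ to fix the spectral type, then test each root against Matignon's sector $|\arg(\lambda)|>\alpha\pi/2$) is the natural one, and it correctly disposes of items (i), (iii), (iv), (v) and the first clause of (ii): the Routh--Hurwitz reduction for three real roots, the factorization $\Delta(\lambda)=(\lambda+a_1)(\lambda^2+a_2)$ under $a_1a_2=a_3$, the product-of-roots argument for (iv), and the identity $\omega^2=3\sigma^2+2a_1\sigma+a_2$ giving $|\arg(\sigma\pm i\omega)|\ge\pi/3$ are all sound. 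Two small caveats: in the first clause of (ii) the hypothesis $a_3\ge0$ admits $a_3=0$, in which case the real root is $0$ and asymptotic stability fails, so one really needs $a_3>0$ there; and throughout you tacitly pass from Matignon's criterion for the linearized system to local asymptotic stability of the nonlinear one, the same unstated step the cited source makes.

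The genuine gap is the second clause of (ii), which you explicitly leave open as ``the main technical obstacle''. It is not merely hard: your own Vieta identity already shows it cannot be established as stated. From $2r\sigma+\sigma^2+\omega^2=a_2$ and $a_2<0$ you get $2r\sigma<-(\sigma^2+\omega^2)<0$, so $r$ and $\sigma$ have opposite signs; whichever is negative yields a root with $|\arg(\lambda)|\ge\pi/2\ge\alpha\pi/2$, contradicting the asserted conclusion that \emph{every} root satisfies $|\arg(\lambda)|<\alpha\pi/2$. A concrete counterexample is $\Delta(\lambda)=\lambda^3-\lambda^2-4\lambda-6=(\lambda-3)(\lambda^2+2\lambda+2)$, for which $D(\Delta)=-1156<0$, $a_1=-1<0$, $a_2=-4<0$, yet the roots $-1\pm i$ have argument $3\pi/4$. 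So the right move is not a sharper estimate but the recognition that this clause of the quoted theorem is false as written (an error inherited from the literature); at most one can prove the weaker statement that under these hypotheses some root lies outside the stability sector, i.e., $Q$ is not asymptotically stable. Since the paper's later arguments only use parts (iii)--(v), this does not undermine its results, but your proof as written does not (and cannot) close item (ii).
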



\section{An Interesting System}
\label{sec:4}

In \cite{[26]} the following
three-dimensional smooth system
is proposed and investigated:
\begin{equation}
\label{eq:10}
\left(
\begin{matrix}
   {\dot x}\\
   {\dot y}\\
   {\dot z}
\end{matrix}\right)
= \left(
\begin{matrix}
{y - ax + byz}\\
{cy - xz + z}\\
{dxy - hz}
\end{matrix}\right),
\end{equation}
where ${\left[ {x(t),\,y(t),\,z(t)} \right]^T} \in {\mathbb{R}^3}$
is the state vector, and $a$, $b$, $c$, $d$, and $h$ are some positive constants.
The system was shown to be chaotic in a wide parameter range,
and to have an interesting complex dynamical behavior
that varies according with the values of the parameters $a$, $b$, $c$, $d$, and $h$.
The very rich nonlinear dynamics include chaos
and period doubling bifurcations. In particular, the system generates
a two-scroll chaotic attractor for $(a,b,c,d,h)=(3, 2.7, 4.7, 2, 9)$.

Chaos may be seen in many real-life nonlinear systems. About ten years ago,
several experimental and theoretical studies have been done to depict
the chaotic behavior in various electronic systems: nonlinear circuits \cite{Ref2:[1]},
secure communications \cite{Ref2:[2],Ref2:[3]}, lasers \cite{Ref2:[4]},
and Colpitts oscillators \cite{Ref2:[5]}. The system \eqref{eq:10} is particularly
relevant in mechatronics, where it can be used as a carrier producer.
Indeed, because of its wide range chaoticity, one of its important applications
is in secure communication systems. In such a system, a chaotic carrier is used
to transmit the message signal over a channel.
The main motivation for employing such carriers is:
(i) complexity of the carrier, which increases the security of the modulated signal;
(ii) inherent orthonormality of the chaotic signals, avoiding the necessity to use
in-phase local oscillators as often done in telecommunication systems
in order to demodulate the original signal at the receiver;
(iii) wide band signal, which permits the carrier to transmit a wide band message over a noisy channel.
Motivated by the interesting behavior of \eqref{eq:10} obtained in \cite{[26]},
our main goal is to investigate the chaotic dynamics
of the corresponding fractional system.
This is done in the next section.


\section{Main Results}
\label{sec:5}

Consider a 3D autonomous fractional system
\begin{equation}
\label{eq:17}
{}_0^CD_t^{{q}}x(t) = f\left(x(t)\right),
\end{equation}
where $q = {\left( {{q_1},{q_2},{q_3}} \right)^T}$
is the fractional order of differentiation,
$x(t) \in {{\mathbb{R}}^3}$ is the state vector,
and $f:{{\mathbb{R}}^3} \to {{\mathbb{R}}^3}$ is
the nonlinear vector field. Let $Q = (x_1^*,x_2^*,x_3^*)$
be an equilibrium of the system \eqref{eq:17},
\textrm{i.e.}, let $f(Q) = 0$. We say that
$Q$ is a saddle point for \eqref{eq:17} if the eigenvalues
of the Jacobian matrix $J = {\frac{\partial f}{\partial x}}$
evaluated at $Q$ are $a$ and $b \pm j c$, where $ab < 0$ and
$c \ne 0$. A saddle point $Q$ is called a saddle point of index 1
if $a > 0$ and $b < 0$, and it is called a saddle point of index 2
if $a < 0$ and $b > 0$. In chaotic systems of Shil\cprime nikov type,
scrolls in a chaotic attractor are generated only around the saddle points of index 2.
Moreover, saddle points of index 1 are responsible only for connecting scrolls \cite{[29]}.


\subsection{System Description}

We are interested in the particular case of \eqref{eq:17} that corresponds
to the commensurate fractional order version of \eqref{eq:10}.
For that we substitute the standard/integer order derivatives in \eqref{eq:10}
by Caputo fractional derivatives of order $\alpha \in (0,1)$:
\begin{equation}
\label{eq:16}
\left(
\begin{matrix}
{{}_0^CD_t^{{\alpha}}x}\\
{{}_0^CD_t^{{\alpha}}y}\\
{{}_0^CD_t^{{\alpha}}z}
\end{matrix}\right)
= \left(
\begin{matrix}
{y - ax + byz}\\
{cy - xz + z}\\
{dxy - hz}
\end{matrix}\right).
\end{equation}
System \eqref{eq:16} can be used to model several mechatronic systems.
One possible application is to model the nonlinear dynamics of a rotor-bearing system
with the purpose of diagnosing malfunctions and effectively improve
the dynamic characteristics of the rotor system.
Chu and Zhang analyzed the bifurcation and chaotic motion of a rub-impact rotor system.
They found three different routes to chaos with an increasing rotating speed \cite{Ref2:[6]}.
Later, Chu observed very rich forms of periodic and chaotic vibrations through experimental verification.
These results are of great importance to the fault diagnosis of the rub-impact problem \cite{Ref2:[7]}.
Ehrich studies the bifurcation of a bearing-rotor system, identifying a sub-harmonic vibration phenomenon
in a rotor dynamic system \cite{Ref2:[8]}. Goldman and Muszynska analyze the chaotic behavior
of a rub-impact rotor using numerical emulation and simple experimental verification.
They conclude that the rub can lead to higher order harmonics, sub-harmonic fractional frequencies,
or to chaotic vibrations \cite{Ref2:[9]}. Lin et al. analyze the nonlinear behavior
of rub-related vibration in rotating machinery: the effects of the rotating speed, clearance,
damping factors, friction coefficients, and boundary stiffness are investigated \cite{Ref2:[10]}.
Our system \eqref{eq:16} can also be regarded as a model for a DC-motor with chaotic behavior
(self-sustained oscillations according to backlash and dead-zone of the gears) \cite{DC:drive}.
A schematic diagram of a DC drive and its circuits, with separate excitation
and controller with hysteresis, can be found in \cite[Fig.~1]{DC:drive}.
The states of such system are the current $i_a$ in the motor armature circuit;
the current $i_f$ in the excitation coil;
and the rotor angular speed $\omega_r$.
This real life system can be described by \eqref{eq:16}, after normalization
and rescale according to the parameters and control signals of the circuit,
with the correspondence $x \leftrightarrow \omega_r$,
$y \leftrightarrow i_f$, and $z \leftrightarrow i_a$.

To find the equilibria of system \eqref{eq:16}, it is enough to equate
the right-hand side of \eqref{eq:16} to zero:
${y - ax + byz} = 0$,
${cy - xz + z} = 0$,
${dxy - hz} = 0$.
One concludes that the system has 5 equilibria:
\begin{equation}
\label{eq:12}
\begin{split}
{Q_1} &= \left(0, 0,  0\right),\\
{Q_2} &= \left(\frac{d + \sqrt \Delta}{2d},
\frac{h}{b}\left( \frac{- 1 + \sqrt {1 + \Lambda }}{d + \sqrt \Delta} \right),
\frac{ - 1 + \sqrt {1 + \Lambda }}{2b}\right),\\
{Q_3} &= \left(\frac{d + \sqrt \Delta}{2d},
\frac{h}{b}\left( \frac{ - 1 - \sqrt {1 + \Lambda }}{d + \sqrt \Delta} \right),
\frac{- 1 - \sqrt {1 + \Lambda }}{2b}\right),\\
{Q_4} &= \left( \frac{d - \sqrt \Delta}{2d},
\frac{h}{b}\left(\frac{- 1 + \sqrt {1 + \Lambda}}{d - \sqrt \Delta} \right),
\frac{- 1 + \sqrt {1 + \Gamma}}{2b}\right),\\
{Q_5} &= \left(\frac{d - \sqrt \Delta}{2d},
\frac{h}{b}\left(\frac{- 1 - \sqrt {1 + \Lambda }}{d - \sqrt \Delta}\right),
\frac{- 1 - \sqrt {1 + \Gamma}}{2b}\right),
\end{split}
\end{equation}
where
$\Delta = {d^2} + 4chd$,
$\Lambda = \frac{2ab}{h}\left( {d + 2ch + \sqrt \Delta  } \right)$,
and $\Gamma  = \frac{2ab}{h}\left( {d + 2ch - \sqrt \Delta  } \right)$.
The Jacobian matrix for \eqref{eq:16}, evaluated in an
equilibrium point $Q = \left({x^*},{y^*},{z^*}\right)$, is given by
\begin{equation*}
J = \left(
\begin{matrix}
{ - a} & {1 + b{z^*}} & {b{y^*}}\\
{ - {z^*}} & c & {1 - {x^*}}\\
{d{y^*}} & {d{x^*}} & { - h}
\end{matrix} \right).
\end{equation*}

Our first result gives a necessary condition on the fractional
order of differentiation $\alpha$, for chaos to occur.

\begin{theorem}[Necessary condition for occurrence
of a chaotic attractor in the fractional order system
\eqref{eq:16}]
\label{thm:new1}
If the fractional order system \eqref{eq:16} exhibits a chaotic attractor, then
\begin{equation}
\label{eq:20}
\alpha > \frac{2}{\pi}\arctan\,\left(
\frac{\left| {{\mathop{\rm Im}\nolimits}
\,(\lambda )} \right|}{{\mathop{\rm Re}\nolimits} \,(\lambda )}\right)
\end{equation}
for any eigenvalue $\lambda$ of $Q_i$ in \eqref{eq:12}, $i = 1, \ldots, 5$.
\end{theorem}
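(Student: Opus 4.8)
The plan is to argue by contraposition, combining the standard necessary condition for the existence of a chaotic (double-scroll / Shil$'$nikov-type) attractor with the Matignon--type stability criterion for commensurate fractional order systems (the first theorem of Section~\ref{sec:3}). The guiding idea is: a chaotic attractor cannot coexist with a globally attracting equilibrium set, so every equilibrium $Q_i$ must be \emph{unstable} for the fractional system; and for a commensurate system ${}_0^CD_t^\alpha x = Ax$ with $0 < \alpha \le 1$, instability of the linearization forces the existence of an eigenvalue $\lambda$ of the Jacobian with $|\arg(\lambda)| < \alpha \tfrac{\pi}{2}$ -- equivalently, with $\operatorname{Re}(\lambda) > 0$ \emph{and} $\tfrac{2}{\pi}\arctan\!\left(|\operatorname{Im}(\lambda)|/\operatorname{Re}(\lambda)\right) < \alpha$, which is exactly the inequality \eqref{eq:20}.

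First I would make precise the sense in which ``exhibits a chaotic attractor'' is being used: a bounded attracting invariant set that is not a single point, not a periodic orbit, and contains at least one equilibrium on or in the basin-closure of its trajectories -- in particular, one of the $Q_i$ in \eqref{eq:12} fails to be asymptotically stable for \eqref{eq:16}. (For a double-scroll attractor of Shil$'$nikov type the scrolls are generated around the saddle points of index~2 discussed just before the statement, so the relevant $Q_i$ is literally inside the attractor.) Next I would linearize \eqref{eq:16} about that equilibrium, obtaining the constant matrix $J$ displayed above evaluated at $Q = (x^*,y^*,z^*)$, and invoke the Matignon stability theorem: since the nontrivial attractor precludes local asymptotic stability of the equilibrium, the criterion $|\arg(\lambda)| > \alpha\tfrac{\pi}{2}$ for \emph{all} eigenvalues must fail, so there is at least one eigenvalue $\lambda_0$ of $J|_Q$ with $|\arg(\lambda_0)| \le \alpha\tfrac{\pi}{2}$.

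To convert this into \eqref{eq:20}, I would split into cases on whether $\lambda_0$ is real or complex. If $\lambda_0$ is real, then either $\lambda_0 \ge 0$, and the right-hand side of \eqref{eq:20} is $0$, so the inequality holds trivially (with the usual convention $\arctan 0 = 0$ and $\alpha > 0$); or $\lambda_0 < 0$, giving $|\arg(\lambda_0)| = \pi > \alpha\tfrac{\pi}{2}$, which cannot happen. If $\lambda_0 = \operatorname{Re}(\lambda_0) + i\operatorname{Im}(\lambda_0)$ with $\operatorname{Im}(\lambda_0) \ne 0$, then $|\arg(\lambda_0)| \le \alpha\tfrac{\pi}{2} < \tfrac{\pi}{2}$ forces $\operatorname{Re}(\lambda_0) > 0$, and $|\arg(\lambda_0)| = \arctan\!\big(|\operatorname{Im}(\lambda_0)|/\operatorname{Re}(\lambda_0)\big)$, so $\arctan\!\big(|\operatorname{Im}(\lambda_0)|/\operatorname{Re}(\lambda_0)\big) \le \alpha\tfrac{\pi}{2}$, i.e. $\tfrac{2}{\pi}\arctan\!\big(|\operatorname{Im}(\lambda_0)|/\operatorname{Re}(\lambda_0)\big) \le \alpha$. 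Since the phrasing of the theorem quantifies over ``any eigenvalue $\lambda$ of $Q_i$'', I read it as asserting the existence of such a $\lambda$ (the witness $\lambda_0$); I would state \eqref{eq:20} for that eigenvalue and note that for genuinely chaotic behavior the inequality is in fact strict.

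The main obstacle is not the linear-algebra manipulation but pinning down the \emph{dynamical} input cleanly: strictly speaking, ``a chaotic attractor exists'' does not by pure topology imply that one of the listed equilibria is non-attracting -- one needs the standard fact (used tacitly throughout the fractional-chaos literature, and underpinned here by the Shil$'$nikov-type remark and by Theorem~\ref{thm3}) that a bounded strange attractor in such a polynomial system must enclose or be organized around an equilibrium whose linearization is unstable, and that local asymptotic stability of \emph{all} equilibria would force trajectories to converge to a point rather than wander on an attractor. I would therefore state this as the working hypothesis (``if a chaotic attractor exists then some $Q_i$ is not asymptotically stable''), cite the Shil$'$nikov/index-2 discussion and \cite{[29]}, and then the rest is the short case analysis above. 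A secondary, purely cosmetic point is handling the $\operatorname{Re}(\lambda)=0$ boundary and the real-eigenvalue case so that the ratio in \eqref{eq:20} is well defined; both are absorbed by the convention $\arctan(+\infty)=\tfrac{\pi}{2}$ and the observation that those borderline configurations are incompatible with a persistent chaotic attractor anyway.
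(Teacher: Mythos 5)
Your proposal is correct and follows essentially the same route as the paper's own proof: the existence of a chaotic attractor forces instability of the equilibria surrounded by scrolls, and the resulting failure of the $\left|\arg(\lambda)\right| > \alpha\frac{\pi}{2}$ stability condition for the Jacobian eigenvalues translates directly into \eqref{eq:20}. The only (minor) divergence is that the paper passes through the incommensurate criterion \eqref{eq:7} and the threshold $\frac{\pi}{2M}$ in \eqref{eq:18} before silently converting back to eigenvalues of $J$, whereas you apply the commensurate Matignon criterion directly; your explicit real-versus-complex case analysis and your remark on the quantifier over eigenvalues make precise the step the paper dismisses with ``follows immediately.''
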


\begin{proof}
Assume that the 3D fractional system \eqref{eq:16} displays a chaotic attractor.
For every scroll existing in the chaotic attractor,
the system has a saddle point of index 2 encircled by its respective scroll.
Suppose that $\Omega$ is the set of equilibrium points of the system surrounded by scrolls.
A necessary condition for the fractional order system \eqref{eq:16}
to exhibit a chaotic attractor is instability of the equilibrium points in $\Omega$
\cite{[30]}. Otherwise, one of these equilibrium points becomes asymptotically stable
and attracts the nearby trajectories. According to \eqref{eq:7},
this necessary condition is mathematically equivalent to
\begin{equation}
\label{eq:18}
\frac{\pi}{2M} - \mathop {\min }\limits_i
\left\{ {\left| {\arg \,({\lambda _i})} \right|} \right\} \ge 0,
\end{equation}
where the $\lambda_i$ are the roots of
$\det\left( {diag\,\left( {{\lambda ^{M{q_1}}}\,\,{\lambda ^{M{q_2}}}\,\,
{\lambda ^{M{q_3}}}} \right) - J\left| {_Q} \right.} \right) = 0$
for all $Q \in \Omega$. Condition \eqref{eq:20} follows immediately from \eqref{eq:18}.
\end{proof}

The nature of the equilibria \eqref{eq:12} of \eqref{eq:16} may be determined
using the corresponding eigenvalues $\lambda$.
The following proposition lists the eigenvalues of each equilibrium,
when the parameters are selected in agreement with Section~\ref{sec:4}.

\begin{proposition}
Consider the fractional system \eqref{eq:16} of commensurate
order $\alpha \in (0,1)$, when the five parameters are selected
to be $(a,b,c,d,h)=(3, 2.7, 4.7, 2, 9)$. Then, the eigenvalues
$\lambda$ for each equilibrium $Q_i$ in \eqref{eq:12},
$i = 1, \ldots, 5$, are given as follows.
\begin{description}
\item[(i)] Eigenvalues of ${Q_1}$: $-9$, $- 3$ and $4.7$.

\item[(ii)] Eigenvalues of ${Q_2}$:
$- 11.0247$ and $1.8623 \pm j 6.6831$.

\item[(iii)] Eigenvalues of ${Q_3}$:
$- 11.7856$ and $2.2428 \pm j 6.8580$.

\item[(iv)] Eigenvalues of ${Q_4}$:
$- 10.7669$ and $1.7335 \pm j 6.0024$.

\item[(v)] Eigenvalues of ${Q_5}$:
$- 11.6813$ and $2.1906 \pm j 6.1881$.
\end{description}
\end{proposition}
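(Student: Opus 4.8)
The plan is to verify the claimed eigenvalues by a direct, explicit computation. First I would substitute the numerical parameters $(a,b,c,d,h)=(3, 2.7, 4.7, 2, 9)$ into the auxiliary quantities $\Delta = d^2 + 4chd$, $\Lambda = \frac{2ab}{h}(d + 2ch + \sqrt{\Delta})$ and $\Gamma = \frac{2ab}{h}(d + 2ch - \sqrt{\Delta})$, obtaining $\Delta = 4 + 4\cdot 4.7\cdot 9\cdot 2 = 342.4$, and then the corresponding numerical values of $\Lambda$ and $\Gamma$. Plugging these into the formulas \eqref{eq:12} yields the numerical coordinates of each equilibrium point $Q_i = (x_i^*, y_i^*, z_i^*)$.

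Next, for each $i$, I would substitute $(x_i^*, y_i^*, z_i^*)$ into the Jacobian
\begin{equation*}
J = \left(
\begin{matrix}
{ - a} & {1 + b{z^*}} & {b{y^*}}\\
{ - {z^*}} & c & {1 - {x^*}}\\
{d{y^*}} & {d{x^*}} & { - h}
\end{matrix} \right),
\end{equation*}
and form the characteristic polynomial $\det(\lambda I - J) = \lambda^3 + a_1\lambda^2 + a_2\lambda + a_3$. For $Q_1 = (0,0,0)$ the Jacobian is already upper/lower block triangular in the relevant sense — in fact it is
$J|_{Q_1} = \left(\begin{matrix} -3 & 1 & 0 \\ 0 & 4.7 & 1 \\ 0 & 0 & -9 \end{matrix}\right)$, which is upper triangular, so its eigenvalues are immediately its diagonal entries $-3$, $4.7$, $-9$, establishing part (i) with no real work. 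For $Q_2,\dots,Q_5$ one must compute the cubic coefficients $a_1 = \mathrm{tr}(-J)= a - c + h$ (which is constant, $=3-4.7+9=7.3$, across all four points), $a_2$, and $a_3$ from the $2\times 2$ principal minors and the determinant, then find the roots numerically. Since each cubic has one negative real root and a complex-conjugate pair, I would factor out the real root $\lambda = -r$ and solve the remaining quadratic to read off $b \pm jc$, checking that the numbers match the stated values to the quoted precision.

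The main obstacle is purely computational bookkeeping: the coordinates of $Q_2,\dots,Q_5$ involve nested radicals ($\sqrt{\Delta}$ inside $\sqrt{1+\Lambda}$ and $\sqrt{1+\Gamma}$), so the entries of $J$ are messy irrational numbers, and carrying enough significant digits through the cubic's coefficients to certify four-digit accuracy in the complex roots requires care. There is no conceptual difficulty — no appeal to any theorem beyond basic linear algebra — but the arithmetic is heavy enough that in practice one would delegate it to a computer algebra system and simply report the outcome. A minor sanity check I would include: the sum of the three eigenvalues of each $Q_i$ must equal $\mathrm{tr}(J|_{Q_i}) = -a + c - h = -3 + 4.7 - 9 = -7.3$; indeed $-11.0247 + 2\cdot 1.8623 = -7.3$, $-11.7856 + 2\cdot 2.2428 = -7.3$, $-10.7669 + 2\cdot 1.7335 = -7.299 \approx -7.3$, and $-11.6813 + 2\cdot 2.1906 = -7.3$, all consistent, which gives quick confidence in the listed values. (I also note a probable typo in \eqref{eq:12}: the $z$-coordinates of $Q_4$ and $Q_5$ reference $\Gamma$ rather than $\Lambda$; whichever convention the authors intend, the verification procedure is unchanged.)
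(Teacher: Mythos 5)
Your proposal is correct and matches the paper's approach exactly: the paper's proof is simply ``Follows by direct computations,'' and you have spelled out precisely that computation (equilibria from \eqref{eq:12}, Jacobian, characteristic cubic, numerical roots), with the added benefit of the triangular-matrix observation for $Q_1$ and the trace check $\sum\lambda = -a+c-h = -7.3$ confirming the listed values. Nothing is missing.
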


\begin{proof}
Follows by direct computations.
\end{proof}

\begin{corollary}
For the fractional system \eqref{eq:16} with
$(a,b,c,d,h)=(3, 2.7, 4.7, 2, 9)$, the equilibria
${Q_2}$, ${Q_3}$, ${Q_4}$ and ${Q_5}$ are saddle points of index 2.
\end{corollary}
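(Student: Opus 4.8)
The plan is to deduce the corollary directly from the eigenvalue list supplied by the preceding Proposition, together with the definition of a saddle point of index~2 recalled at the beginning of Section~\ref{sec:5}. First I would restate that definition in the form needed here: an equilibrium $Q$ of the fractional system \eqref{eq:16} (a particular case of \eqref{eq:17}) is a saddle point of index~2 when the Jacobian $J=\partial f/\partial x$ evaluated at $Q$ has a single real eigenvalue $a$ and a complex-conjugate pair $b\pm jc$ satisfying $a<0$, $b>0$, and $c\neq 0$; note that $a<0$ together with $b>0$ already forces $ab<0$, so the underlying ``saddle point'' requirement $ab<0$ is automatically subsumed.

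Next I would simply match the spectra given in items~(ii)--(v) of the Proposition against this sign pattern, one equilibrium at a time. For $Q_2$ the eigenvalues are $-11.0247$ and $1.8623\pm j6.6831$, so $a=-11.0247<0$, $b=1.8623>0$, $c=6.6831\neq 0$; hence $Q_2$ is a saddle point of index~2. The identical check applies to $Q_3$ (real eigenvalue $-11.7856<0$, real part of complex pair $2.2428>0$, imaginary part $6.8580\neq 0$), to $Q_4$ (real eigenvalue $-10.7669<0$, real part $1.7335>0$, imaginary part $6.0024\neq 0$), and to $Q_5$ (real eigenvalue $-11.6813<0$, real part $2.1906>0$, imaginary part $6.1881\neq 0$). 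Since these are the only four equilibria named in the statement, this exhausts the cases.

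There is no genuine obstacle in this argument: once the Proposition is in hand, the corollary is an immediate matter of reading off and comparing signs. If anything deserves a word of care, it is only to make explicit that the definition demands opposite signs for the real eigenvalue and the real part of the complex-conjugate pair, which holds here because in every one of the four cases the real eigenvalue is negative while the real part of the complex pair is positive.
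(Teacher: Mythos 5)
Your proof is correct and follows exactly the route the paper intends: the corollary is an immediate consequence of the eigenvalue list in the preceding Proposition combined with the definition of a saddle point of index~2 given at the start of Section~\ref{sec:5}, and your sign checks for $Q_2$ through $Q_5$ are exactly what is needed.
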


We conclude that if there are some chaotic attractors for $(a,b,c,d,h)=(3, 2.7, 4.7, 2, 9)$,
they are located around the equilibria ${Q_2}$, ${Q_3}$, ${Q_4}$, ${Q_5}$.
Examining \eqref{eq:20} for these equilibria, we obtain:
$\alpha > 0.8270$ for $Q_2$;
$\alpha > 0.7988$ for $Q_3$;
$\alpha > 0.8210$ for $Q_4$;
and $\alpha > 0.7834$ for $Q_5$.
Therefore, by choosing $\alpha > 0.8270$,
we ensure that all the eigenvalues remain in the instability region.

An efficient method for solving fractional order differential equations
is the predictor-corrector scheme or, more precisely,
the PECE (Predict, Evaluate, Correct, Evaluate)
technique that has been investigated in \cite{[27],[28]}.
It represents a generalization of the Adams--Bashforth--Moulton algorithm.
We use the PECE scheme throughout the paper for numerical simulations.

In Figures~\ref{fig2} to \ref{fig8},
the initial conditions were selected as
$\left({{x_0},{y_0},{z_0}}\right) = \left({5, - 2,1}\right)$,
and only the fractional order of differentiation $\alpha$ changes.
When $\alpha \rightarrow 1$, our numerical results are in agreement
with \cite{[26]}. The numerical simulation
of the chaotic attractor for $\alpha \rightarrow 1$
is depicted in Fig.~\ref{fig2}.

\begin{figure}
\begin{center}
\includegraphics[scale=0.60]{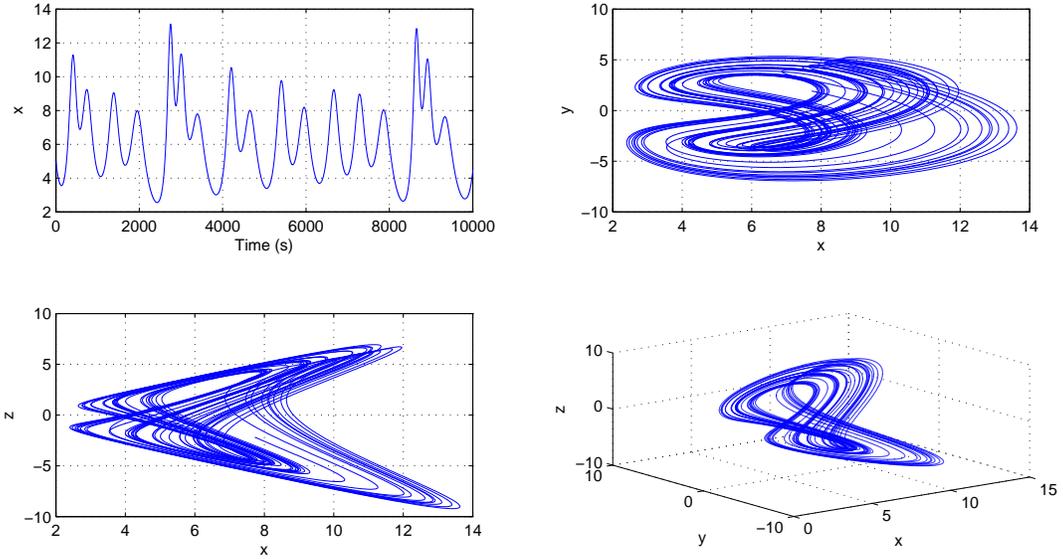}
\end{center}
\caption{Chaotic attractor of fractional system \eqref{eq:16}
with $a=3$, $b = 2.7$, $c = 4.7$, $d = 2$, $h = 9$,
and initial conditions $\left({{x_0},{y_0},{z_0}}\right) = (5, -2, 1)$,
when $\alpha \rightarrow 1$.}
\label{fig2}
\end{figure}

In Fig.~\ref{fig5} we illustrate the chaotic behavior of \eqref{eq:16}
when $\alpha = 0.90$, and in Fig.~\ref{fig6}, Fig.~\ref{fig7},
and Fig.~\ref{fig8}, the behavior of \eqref{eq:16} is depicted for $\alpha = 0.86$,
$\alpha = 0.80$, and $\alpha = 0.77$, respectively.

\begin{figure}
\begin{center}
\includegraphics[scale=0.60]{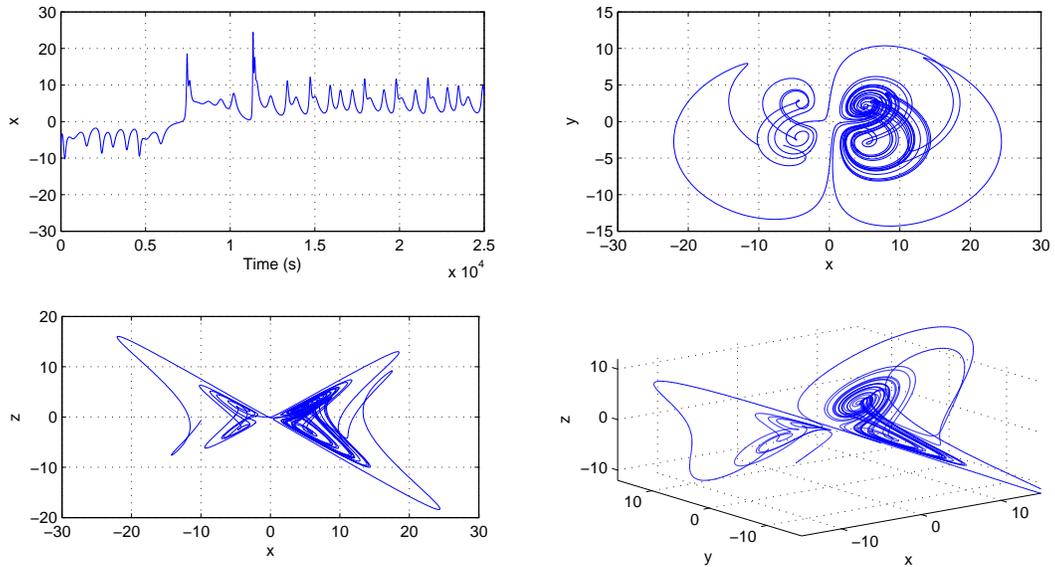}
\end{center}
\caption{Numerical results for the fractional order system \eqref{eq:16}
with $a=3$, $b = 2.7$, $c = 4.7$, $d = 2$, $h = 9$,
and initial conditions $\left({{x_0},{y_0},{z_0}}\right)=(5,-2,1)$,
when the fractional order is $\alpha = 0.90$.}
\label{fig5}
\end{figure}

\begin{figure}
\begin{center}
\includegraphics[scale=0.60]{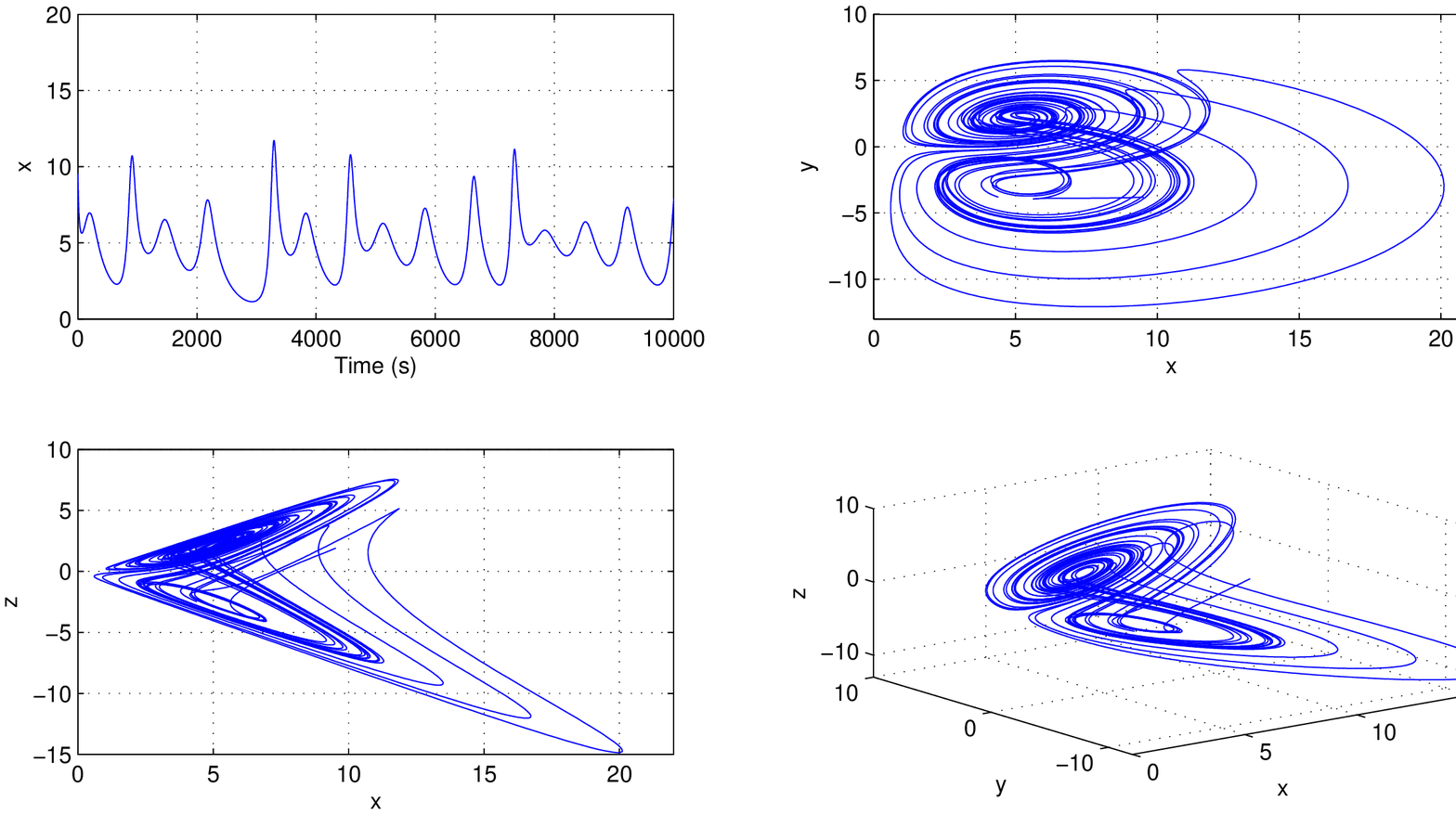}
\end{center}
\caption{Numerical results for the fractional order system \eqref{eq:16}
with $a=3$, $b = 2.7$, $c = 4.7$, $d = 2$, $h = 9$,
and initial conditions $\left({{x_0},{y_0},{z_0}}\right) = (5,-2,1)$,
when the fractional order is $\alpha = 0.86$.}
\label{fig6}
\end{figure}

\begin{figure}
\begin{center}
\includegraphics[scale=0.60]{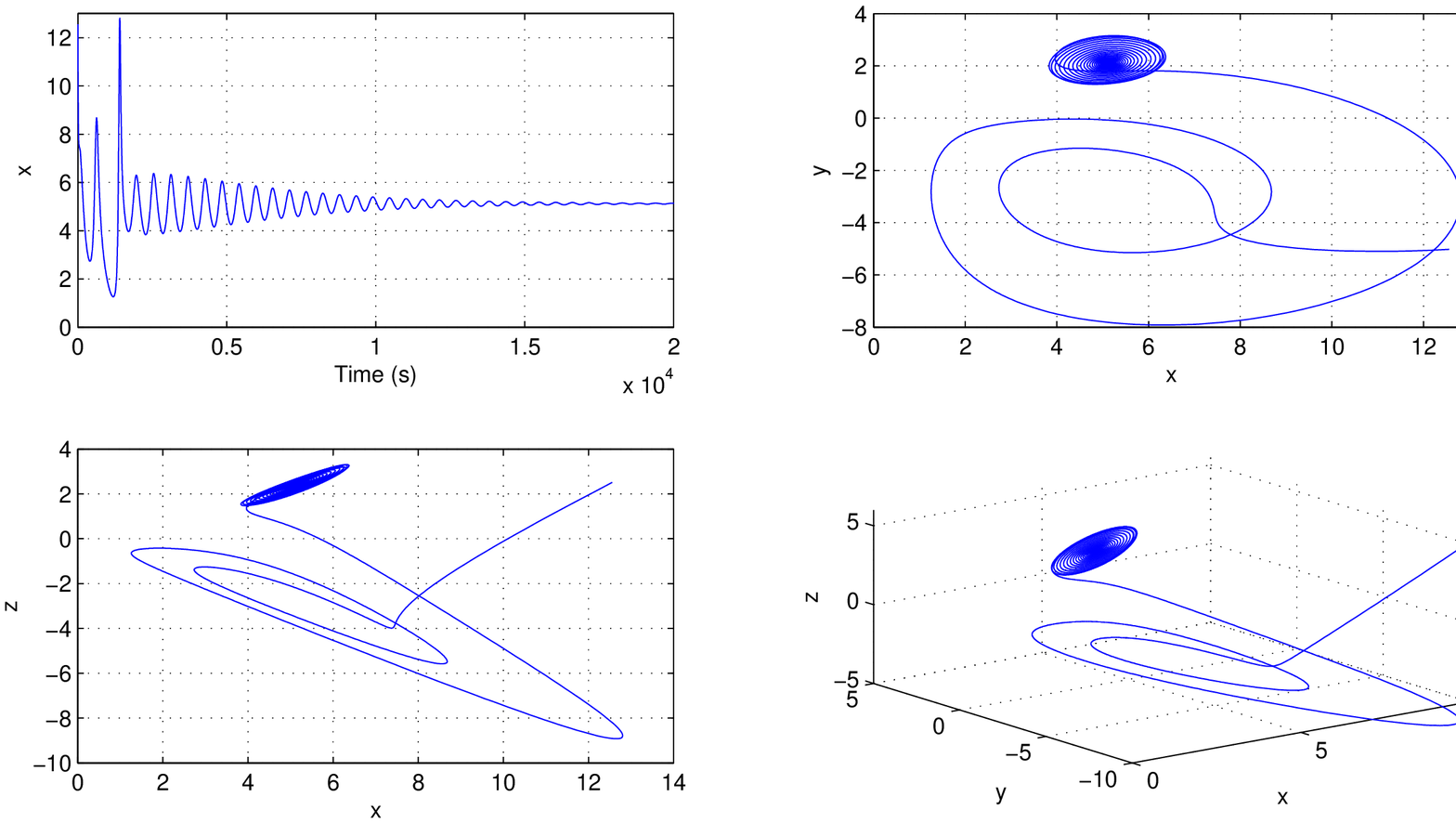}
\end{center}
\caption{Numerical results for the fractional order system \eqref{eq:16}
with $a=3$, $b = 2.7$, $c = 4.7$, $d = 2$, $h = 9$,
and initial conditions $\left({{x_0},{y_0},{z_0}}\right) = (5,-2,1)$,
when the fractional order is $\alpha = 0.80$.}
\label{fig7}
\end{figure}

\begin{figure}
\begin{center}
\includegraphics[scale=0.60]{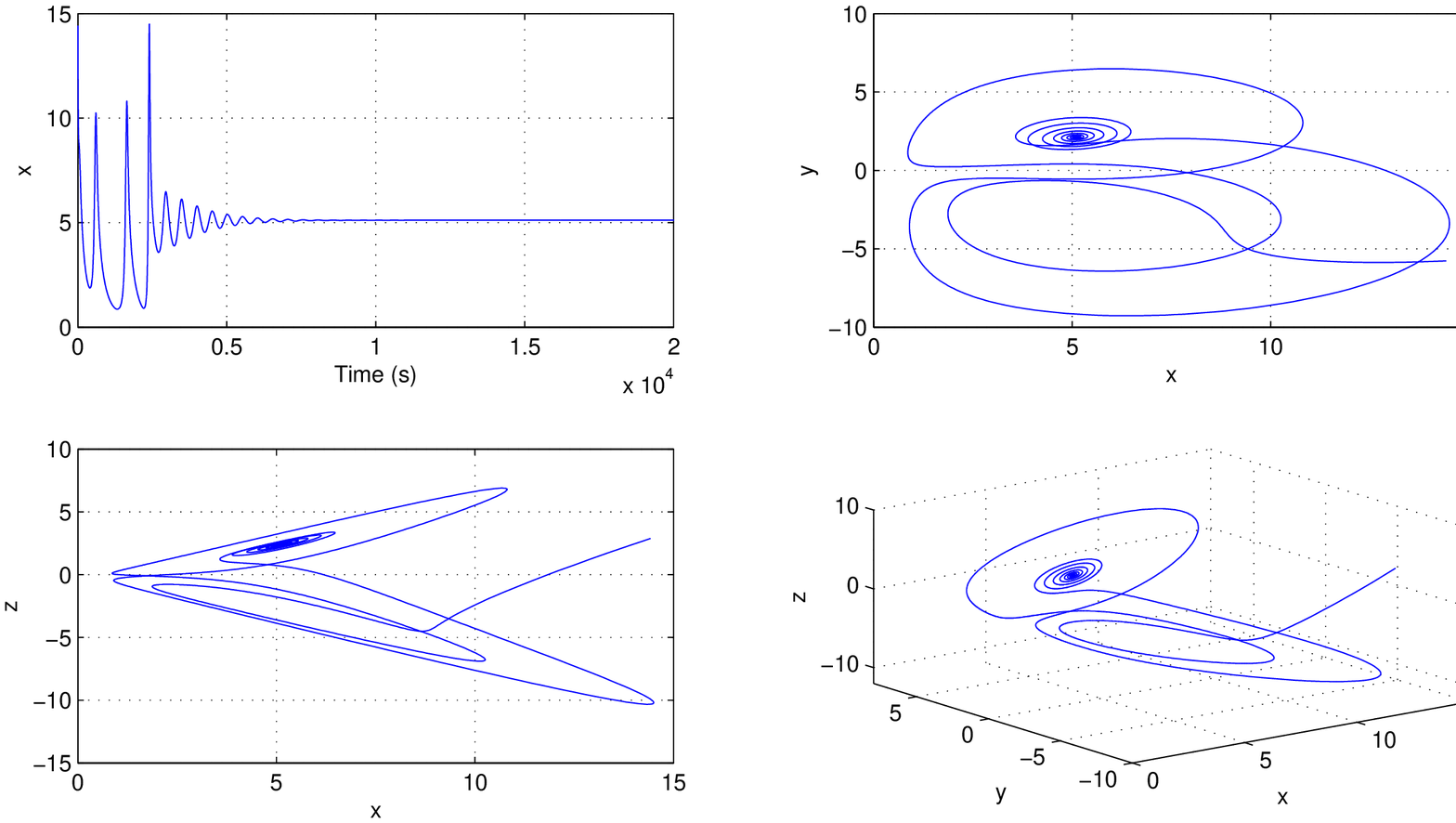}
\end{center}
\caption{Numerical results for the fractional order system \eqref{eq:16}
with $a=3$, $b = 2.7$, $c = 4.7$, $d = 2$, $h = 9$,
and initial conditions $\left({{x_0},{y_0},{z_0}}\right) = (5,-2,1)$,
when the fractional order is $\alpha = 0.77$.}
\label{fig8}
\end{figure}

As one can see, in Fig.~\ref{fig7}
and Fig.~\ref{fig8} chaos has diminished,
and the trajectories of the corresponding
fractional system converge to their equilibria.


\subsection{Control of the Fractional Order Chaotic System}

Consider the fractional order chaotic system \eqref{eq:16}
of commensurate order $\alpha \in (0, 1)$. In order to control the system,
\textrm{i.e.}, force the trajectories to go to the equilibria,
we add a control parameter $u = \left(u_1, u_2, u_3\right)$ as follows:
\begin{equation}
\label{eq:23}
\left(
\begin{matrix}
{{}_0^CD_t^{{\alpha}}x}\\
{{}_0^CD_t^{{\alpha}}y}\\
{{}_0^CD_t^{{\alpha}}z}
\end{matrix}\right)
= \left(
\begin{matrix}
{y - ax + byz}\\
{cy - xz + z}\\
{dxy - hz}
\end{matrix}\right)
+ \left(
\begin{matrix}
{{u_1}}\\
{{u_2}}\\
{{u_3}}
\end{matrix}\right).
\end{equation}
One of the simplest controllers is the state feedback controller,
which has a simple structure and is easy to implement.
Let us take the structure of the controller as a state feedback, as follows:
\begin{equation*}
\left\{
\begin{matrix}
{{u_1} =  - {k_1}(x - {x^*})},\\
{{u_2} =  - {k_2}(y - {y^*})},\\
{{u_3} =  - {k_3}(z - {z^*})}.
\end{matrix}\right.
\end{equation*}
In this way \eqref{eq:23} reduces to
\begin{equation}
\label{eq:25}
\left(
\begin{matrix}
{{}_0^CD_t^{{\alpha}}x}\\
{{}_0^CD_t^{{\alpha}}y}\\
{{}_0^CD_t^{{\alpha}}z}
\end{matrix}\right)
= \left(
\begin{matrix}
{y - ax + byz - {k_1}(x - {x^*})}\\
{cy - xz + z - {k_2}(y - {y^*})}\\
{dxy - hz - {k_3}(z - {z^*})}
\end{matrix}\right).
\end{equation}

Assume that we want to stabilize one of the equilibria, \textrm{e.g.}, ${Q_2}$
(using a similar approach, we can easily design a control law to stabilize
the other unstable equilibria). Next theorem shows
that by choosing appropriate values for gain ${k_1}$,
we can control the fractional order system \eqref{eq:25}.

\begin{theorem}
The trajectories of the fractional controlled system \eqref{eq:25}
with $a = 3$, $b = 2.7$, $c = 4.7$, $d = 2$, and $h = 9$,
are driven to the unstable equilibrium point
${Q_2}= \left( {5.1260,\,2.0794,\,2.3687} \right)$
for all $\alpha \in \left( {0,1} \right)$,
if ${k_2} = {k_3} = 0$ and $- 7.30 < {k_1} < 26.53$.
\end{theorem}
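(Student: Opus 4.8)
The plan is to reduce the stabilization claim to an eigenvalue-location problem for the linearization of \eqref{eq:25} at $Q_2$, and then to apply Theorem~\ref{thm3} (the Tavazoei--Haeri criterion). First I would substitute $k_2 = k_3 = 0$ into \eqref{eq:25} and compute the Jacobian of the closed-loop vector field at the equilibrium $Q_2 = (x^*,y^*,z^*) = (5.1260, 2.0794, 2.3687)$. Since the feedback term $-k_1(x-x^*)$ contributes only $-k_1$ to the $(1,1)$ entry and nothing elsewhere, the closed-loop Jacobian is
\begin{equation*}
J_{\mathrm{cl}} = \left(
\begin{matrix}
{ - a - k_1} & {1 + b z^*} & {b y^*}\\
{ - z^*} & c & {1 - x^*}\\
{d y^*} & {d x^*} & { - h}
\end{matrix} \right),
\end{equation*}
which is just the matrix $J|_{Q_2}$ from the previous subsection with $-a$ replaced by $-a-k_1$. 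Plugging in the numerical parameter values turns this into a matrix with a single free parameter $k_1$, and its characteristic polynomial $\Delta(\lambda) = \lambda^3 + a_1 \lambda^2 + a_2 \lambda + a_3$ has coefficients $a_1(k_1), a_2(k_1), a_3(k_1)$ that are affine (in fact $a_1$ and $a_2$ affine, $a_3$ affine as well since $k_1$ enters linearly in one diagonal entry) functions of $k_1$.

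Next I would determine which part of Theorem~\ref{thm3} applies. One computes the discriminant $D(\Delta)$ from \eqref{eq:9} as a function of $k_1$; since at $k_1 = 0$ the equilibrium $Q_2$ has one real and a complex-conjugate pair of eigenvalues (from the Proposition), $D(\Delta) < 0$ there, and I expect $D(\Delta) < 0$ to persist on the relevant $k_1$-range. Under $D(\Delta) < 0$, the only clause of Theorem~\ref{thm3} that yields local asymptotic stability for \emph{all} $\alpha \in (0,1)$ is clause (iii): $a_1 > 0$, $a_2 > 0$, and $a_1 a_2 - a_3 = 0$. But the theorem is stated for the knife-edge $a_1 a_2 - a_3 = 0$ only; to get an open interval of admissible gains one must instead argue via the Matignon/Routh--Hurwitz picture directly — namely that the Routh--Hurwitz conditions $a_1 > 0$, $a_3 > 0$, $a_1 a_2 - a_3 > 0$ guarantee all eigenvalues lie in the open left half-plane, hence $|\arg(\lambda)| > \pi/2 \ge \alpha \pi/2$ for every $\alpha \in (0,1)$, so the commensurate fractional system of order $\alpha$ is asymptotically stable by the Matignon criterion (Theorem stated after \eqref{eq:5}). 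Thus the real task is to solve the system of inequalities $a_1(k_1) > 0$, $a_3(k_1) > 0$, $a_1(k_1) a_2(k_1) - a_3(k_1) > 0$ in $k_1$ and check that its solution set is exactly the interval $(-7.30, 26.53)$.

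So the key steps, in order, are: (1) write down $J_{\mathrm{cl}}$ and its characteristic polynomial coefficients $a_1(k_1), a_2(k_1), a_3(k_1)$ with the numbers inserted; (2) observe that stability for all $\alpha \in (0,1)$ follows from the classical Routh--Hurwitz conditions on $\Delta$, invoking the Matignon criterion since $|\arg(\lambda)|>\pi/2$ dominates $\alpha\pi/2$; (3) solve the three scalar inequalities in $k_1$ and verify the feasible set is $(-7.30, 26.53)$; (4) note that $k_1$ leaves $Q_2$ unchanged as an equilibrium (since the feedback vanishes there), so the stabilized point is indeed $Q_2$. The main obstacle is step (3): the condition $a_1 a_2 - a_3 > 0$ is quadratic in $k_1$, so one must locate its roots and intersect with the half-lines $a_1 > 0$, $a_3 > 0$; the claimed endpoints $-7.30$ and $26.53$ should emerge as the binding roots (I expect the lower endpoint from $a_3 > 0$ or from $a_1 a_2 - a_3 > 0$, the upper endpoint from $a_1 a_2 - a_3 > 0$), and the delicate part is confirming no other constraint cuts the interval shorter and that $D(\Delta)$ does not change sign in a way that would require a different clause of Theorem~\ref{thm3}. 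I would close by remarking that since the bound comes purely from left-half-plane placement, it is uniform in $\alpha$, which is exactly the assertion.
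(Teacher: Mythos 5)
Your setup is essentially the paper's: form the closed-loop Jacobian at $Q_2$, extract the characteristic polynomial with affine coefficients $a_1(k_1),a_2(k_1),a_3(k_1)$, and reduce stabilization to an eigenvalue-argument condition. Your criticism of clause (iii) of Theorem~\ref{thm3} is well aimed: that clause requires the knife-edge equality $a_1a_2-a_3=0$, which can hold only at isolated values of $k_1$, yet this is precisely the clause the paper invokes while simultaneously claiming the conditions hold on the whole interval $(-7.30,\,26.53)$ (with the paper's own reduced polynomial the equality holds only at $k_1\approx 16.96$, the very gain used in Fig.~\ref{fig9}). Your proposed repair --- strict Routh--Hurwitz on $\Delta$ plus Matignon, using $\operatorname{Re}\lambda<0\Rightarrow|\arg\lambda|>\pi/2>\alpha\pi/2$ uniformly in $\alpha\in(0,1)$ --- is the logically sound way to obtain an open interval of gains, and in that respect it is cleaner than the paper's argument.

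The genuine gap is that your step (3) cannot be completed: the Routh--Hurwitz feasible set is not $(-7.30,\,26.53)$, and no correct argument can certify that whole interval. Take $k_1=0$, which lies in the claimed range: the closed loop is then the uncontrolled system, whose linearization at $Q_2$ has eigenvalues $1.8623\pm j\,6.6831$ with positive real part, so $|\arg\lambda|\approx 1.299<\alpha\pi/2$ for every $\alpha>0.827$ and $Q_2$ is unstable --- the paper itself exhibits chaos at $\alpha=0.9$ with no control. Quantitatively, with $k_2=k_3=0$ one gets $a_1=7.3+k_1$, $a_2\approx 7.07+4.3k_1$ (the constant $21.8733$ in the paper's reduced polynomial is itself an error: the sum of the $2\times 2$ principal minors at $k_1=0$ is $3.4177+3.6509-0.0002\approx 7.07$, consistent with the listed eigenvalues of $Q_2$), and $a_3\approx 530.64-0.0002\,k_1$; the binding inequality $a_1a_2-a_3>0$ then forces $k_1\gtrsim 7$, so the true left-half-plane range is roughly $k_1>7$. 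The endpoint $-7.30$ is only the necessary condition $a_1>0$, and $26.53$ does not arise from any of the three inequalities (it appears to be $a_3>0\Leftrightarrow k_1<26.53\times 10^{5}$ with a misplaced factor). So the defect is not in your method but in the target: carried out honestly, your plan would refute the stated interval at step (3) rather than prove the theorem, and an accurate statement would have to replace $(-7.30,\,26.53)$ by (approximately) $k_1>7$, or restrict $\alpha$.
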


\begin{proof}
Computing the Jacobian matrix in the equilibrium point
$Q = \left({{x^*},{y^*},{z^*}}\right)$, we obtain
\begin{equation*}
J = \left(
\begin{matrix}
{ - a - {k_1}} & {1 + b{z^*}} & {b{y^*}}\\
{ - {z^*}} & {c - {k_2}} & { - {x^*} + 1}\\
{d{y^*}} & {d{x^*}} & { - h - {k_3}}
\end{matrix}\right).
\end{equation*}
Constituting the characteristic equation $\Delta(\lambda)$ by
\begin{equation*}
\Delta (\lambda ) = \det (\lambda I - J)
= \det \left(
\begin{matrix}
{\lambda  + a + {k_1}} & { - 1 - b{z^*}} & { - b{y^*}}\\
{{z^*}} & {\lambda  - c + {k_2}} & {{x^*} - 1}\\
{ - d{y^*}} & { - d{x^*}} & {\lambda  + h + {k_3}}
\end{matrix}\right) = 0,
\end{equation*}
we have
\begin{equation}
\label{eq:28}
\begin{split}
\Delta(\lambda ) &= \left( {\lambda  + a + {k_1}} \right)\left[
{\left( {\lambda  - c + {k_2}} \right)\left( {\lambda  + h
+ {k_3}} \right) + d{x^*}\left( {{x^*} - 1} \right)} \right]\\
& \quad + \left( {1 + b{z^*}} \right)\left[ {{z^*}\left( {\lambda
+ h + {k_3}} \right) + d{y^*}({x^*} - 1)} \right]
- b{y^*}\left[ {\left( { - d{x^*}{z^*}} \right)
+ d{y^*}\left( {\lambda  - c + {k_2}} \right)} \right] \\
&= {\lambda ^3} + \left( {a + {k_1} + {k_2} - c + h + {k_3}} \right){\lambda ^2}\\
& \quad + \left( {d{x^*}\left( {{x^*} - 1} \right) + (a + {k_1})({k_2}
- c + h + {k_3}) + ({k_2} - c)(h + {k_3}) + (1 + b{z^*}){z^*} - bd{y^*}^2} \right)\lambda\\
& \quad + (a + {k_1})(d{x^*}\left( {{x^*} - 1} \right) + ({k_2} - c)(h + {k_3}))
+ \left( {1 + b{z^*}} \right)(d{y^*}({x^*} - 1) + {z^*}(h + {k_3}))\\
& \quad - b{y^*}( - d{x^*}{z^*} + d{y^*}( - c + {k_2}))\\
&= 0.
\end{split}
\end{equation}
Based on Theorem~\ref{thm3}, if we choose ${k_1}$, ${k_2}$ and ${k_3}$
such that $D(\Delta ) < 0$, ${a_1} > 0$, ${a_2} > 0$ and ${a_1}{a_2} - {a_3} = 0$,
then $Q = \left({x^*},{y^*},{z^*}\right)$ is locally asymptotically stable
for all $\alpha \in \left( {0,1} \right)$, where
\begin{equation*}
\begin{split}
D(\Delta ) &= 18\left( {a + {k_1} + {k_2} - c + h + {k_3}} \right)\\
&\times \left( {d{x^*}\left( {{x^*} - 1} \right) + (a + {k_1})
({k_2} - c + h + {k_3}) + ({k_2} - c)(h + {k_3}) + (1 + b{z^*}){z^*} - bd{y^*}^2} \right)\\
&\times \Biggl( (a + {k_1})(d{x^*}\left( {{x^*} - 1} \right)
+ ({k_2} - c)(h + {k_3})) + \left( {1 + b{z^*}} \right)
\left(d{y^*}({x^*} - 1) + {z^*}(h + {k_3})\right)\\
&\qquad\qquad - b{y^*}\left( - d{x^*}{z^*} + d{y^*}( - c + {k_2})\right) \Biggr)\\
&+ \Biggl( \left( {a + {k_1} + {k_2} - c + h + {k_3}} \right)
\biggl( d{x^*}\left( {{x^*} - 1} \right) + (a + {k_1})({k_2} - c + h + {k_3})
+ ({k_2} - c)(h + {k_3})\\
&\qquad\qquad + (1 + b{z^*}){z^*} - bd{y^*}^2 \biggr) \Biggr)^2\\
&- 4\biggr( (a + {k_1})(d{x^*}\left( {{x^*} - 1} \right) + ({k_2} - c)(h + {k_3}))
+ \left( {1 + b{z^*}} \right)(d{y^*}({x^*} - 1) + {z^*}(h + {k_3}))\\
&\qquad\qquad - b{y^*}( - d{x^*}{z^*} + d{y^*}( - c + {k_2})) \biggr)
\left( {a + {k_1} + {k_2} - c + h + {k_3}} \right)^3\\
&- 4\biggl( d{x^*}\left( {{x^*} - 1} \right)
+ (a + {k_1})\left({k_2} - c + h + {k_3}\right)
+ ({k_2} - c)(h + {k_3}) + (1 + b{z^*}){z^*} - bd{y^*}^2 \biggr)^3\\
&- 27\biggl( (a + {k_1})\left(d{x^*}\left( {{x^*} - 1} \right) + ({k_2} - c)(h + {k_3})\right)
+ \left( {1 + b{z^*}} \right)(d{y^*}({x^*} - 1) + {z^*}(h + {k_3}))\\
&\qquad\qquad - b{y^*}\left(- d{x^*}{z^*} + d{y^*}( - c + {k_2})\right) \biggr)^3
\end{split}
\end{equation*}
and the ${a_i}$, $i = 1, 2, 3$, are found from the coefficients of \eqref{eq:28}.
For the parameters that generate the 2-scroll attractors, \textrm{i.e.},
$a = 3$, $b = 2.7$, $c = 4.7$, $d = 2$, $h = 9$, we have
${Q_2} = \left( {{x^*},{y^*},{z^*}} \right)
= \left( {5.1260,\,2.0794,\,2.3687} \right)$.
Thus, \eqref{eq:28} reduces to
\begin{multline*}
\Delta(\lambda) = {\lambda ^3} + \left( {7.3 + {k_1} + {k_2} + {k_3}} \right){\lambda ^2}
+ \left( {{k_1}{k_2} + {k_2}{k_3} + {k_1}{k_3} + 21.8733} \right)\lambda  + 530.6404\\
- 0.0002{k_1} + 3.6509{k_2} + 3.4177{k_3} + 3{k_2}{k_3} + 9{k_1}{k_2} - 4.7{k_1}{k_3} + {k_1}{k_2}{k_3}.
\end{multline*}
If we choose ${k_2} = {k_3} = 0$, then
\begin{equation*}
\Delta(\lambda) = {\lambda ^3} + \left( {7.3 + {k_1}} \right){\lambda ^2}
+ 21.8733\lambda  + 530.6404 - 0.0002{k_1}.
\end{equation*}
For this characteristic polynomial we have:
\begin{multline*}
D(\Delta) = 18\left( {7.3 + {k_1}} \right)\left( {21.8733} \right)\left(
{530.6404 - 0.0002{k_1}} \right)
+ {\left( {\left( {7.3 + {k_1}} \right)21.8733} \right)^2}\\
- 4\left( {530.6404 - 0.0002{k_1}} \right){\left( {7.3 + {k_1}} \right)^3}
- 4{\left( {21.8733} \right)^3} - 27{\left( {530.6404 - 0.0002{k_1}} \right)^2}.
\end{multline*}
The conditions $D(\Delta ) < 0$, ${a_1} > 0$, ${a_2} > 0$ and ${a_1}{a_2} - {a_3} = 0$
are satisfied for ${k_1} \in (-7.30, 26.53)$.
\end{proof}

In Fig.~\ref{fig9} we have chosen ${k_1} = 16.96$
and the initial conditions to be
$\left( {{x_0},{y_0},{z_0}} \right) = \left( {5,2,2} \right)$.
As can be seen from the figure, all states converge to their equilibria.

\begin{figure}
\begin{center}
\includegraphics[scale=0.60]{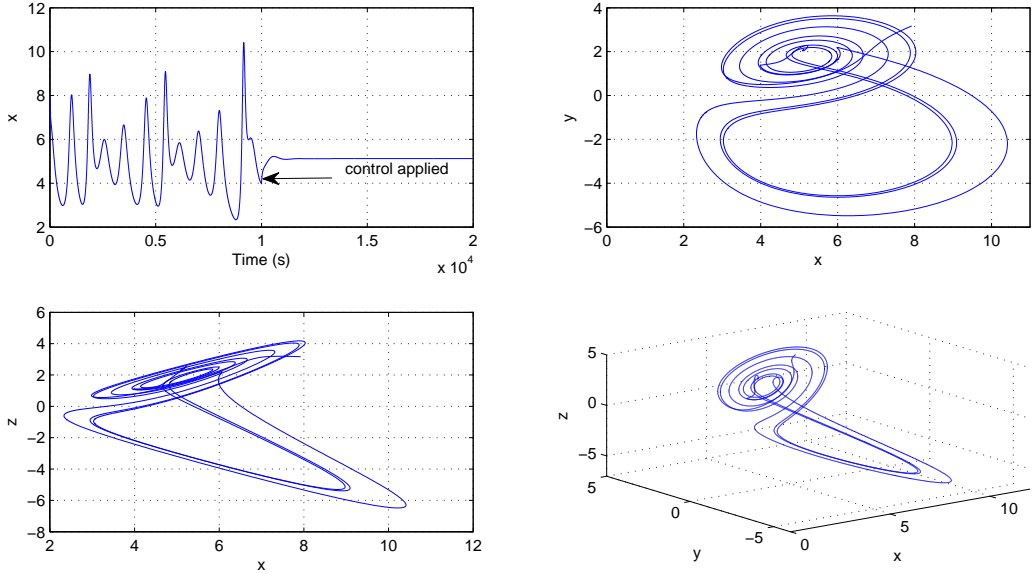}
\end{center}
\caption{Numerical results for the controlled system \eqref{eq:25}
with $a=3$, $b = 2.7$, $c = 4.7$, $d = 2$, $h = 9$, ${k_1} = 16.96$,
${k_2} = {k_3} = 0$, when the fractional order is $\alpha = 0.90$
and initial conditions are $\left({{x_0},{y_0},{z_0}}\right) = (5,2,2)$.}
\label{fig9}
\end{figure}

In order to get a faster response,
one can easily consider the other two gains ($k_2$ and $k_3$) in the control law.
However, this introduces difficulties
in the real implementation of the control system.
We also note that in designing the stabilizing controllers,
because we are utilizing the linearized version
of the nonlinear system around its equilibrium,
we should select the initial conditions near the corresponding equilibrium.


\section{Conclusions and Future Work}
\label{sec:6}

Chaotic fractional order systems have an inherent potential in mechatronic applications,
particularly in secure telecommunication systems where the main part of a transmitter-receiver
configuration is the synchronization between master and slave blocks.
In this article we analyzed the dynamical behavior of a novel fractional order chaotic system.
The chaotic system generalizes the recent integer order system introduced in \cite{[26]}.
The local stability of the equilibria, using the fractional
Routh--Hurwitz conditions, was studied.
Furthermore, using Matignon's stability criteria,
the system was shown to be chaotic in a wide parameter range,
and to have an interesting complex dynamical behavior that varies according
with the values of the parameters $a$, $b$, $c$, $d$, and $h$.
The very rich nonlinear dynamics include chaos and period doubling bifurcations.
Moreover, we derived a lower bound of the fractional order of differentiation for the system to remain chaotic.
Analytical conditions for linear feedback control have been achieved.
Our analysis is valid in a general format,
in which all gains $k_1$, $k_2$, and $k_3$ are considered.
However, and despite the complexity and the wide range of varieties,
it has been shown that the system can be controlled by a single
state-feedback controller. This possibility of stabilizing
the system locally using only controller $k_1$
provides a simple and easy way to control the chaos,
which can be crucial in a real implementation.
Our state feedback approach can be applied
to various chaotic fractional order systems. In particular,
we claim that the techniques here developed can be used,
without fundamental changes,
in the synchronization of two fractional order chaotic systems
in a secure telecommunication system.
This is under study and will be addressed elsewhere.

Our state feedback approach for controlling the system
is based on a Routh--Hurwitz analysis and cannot treat constraints
on the actuating signal. To consider such saturation constraints
on the control signal, a performance index should be defined. Minimizing
the index subject to some constraints on the state and control signals
cannot be considered analytically for the method here implemented.
A direction for future research is to investigate
how one can obtain a global stabilizing controller.
To the best of our knowledge, the global stability of fractional systems
is an interesting open question and available results reduce
to those of \cite{[R01],[R02]}. For different stability concepts
than the Lyapunov one adopted in our work, we refer the reader to \cite{[R03]}.


\section*{Acknowledgements}

This work was partially supported by FEDER through COMPETE,
Operational Programme Factors of Competitiveness, and by Portuguese funds through
the Center for Research and Development in Mathematics and Applications
(CIDMA, University of Aveiro) and the Portuguese Foundation for Science and Technology (FCT),
within project PEst-C/MAT/UI4106/2011 with COMPETE number FCOMP-01-0124-FEDER-022690.
The authors are grateful to two anonymous referees for valuable comments and helpful suggestions.



\end{document}